\def\RT{\mathrm{RT}}
\def\N{\mathbb N}
\def\A{\mathcal A}
\def\B{\mathcal B}
\def\uu{\mathbf{u}}
\title{The limit of repetition thresholds of rich sequences}
\author{Lubom\'ira Dvo\v r\'akov\'a\authornote{1}
\and Edita Pelantov\'a\authornote{1}
}
\begin{document}

\maketitle

\begin{abstract}
The repetition threshold of a class of sequences is the smallest number $r$ such that a~sequence from the class contains no repetition with exponent $> r$. We focus on the class~$\mathcal{C}_d$ of $d$-ary sequences rich in palindromes. In 2020, Currie, Mol, and Rampersad determined the repetition threshold for $\mathcal{C}_2$. In 2024, Currie, Mol and Peltom\"aki found the repetition threshold for $\mathcal{C}_3$ and conjectured that the repetition threshold for $\mathcal{C}_d$  tends to~2 with  $d$ growing to infinity. Here we verify their conjecture. 
\end{abstract}

\section{Introduction}
A~sequence is called {\em rich (in palindromes)} if each of its factors contains the maximum number of distinct palindromes. 
The definition was motivated by an observation by Justin et al.~\cite{DrJuPi2001} that the number of distinct non-empty palindromic factors contained in a word of length $n$ is smaller than or equal to $n$. The terminology was introduced by Glen et al. in 2009~\cite{GlJuWi2009}.
Rich sequences may be characterized in many different ways: using complete return words~\cite{GlJuWi2009} or using a relation between factor and palindromic complexity~\cite{BuLuGlZa2009} or using extensions of bispecial factors~\cite{BaPeSt2010}.

In this paper, we focus on repetitions in rich sequences. Any prefix $z$ of length $p$ of $u^\omega=uuu\cdots$ can be written as $u^e$, where $e=\frac{p}{q}$ and $q$ is the length of $u$. If $u$ is of minimum length, $e$ is called the {\em exponent} of $z$. 
For example, the word $magma$ can be written as ${mag}^{\frac{5}{3}}$.

\medskip
The {\em critical exponent}  $E(\uu )$ of a sequence $\uu$ is defined as
$$E(\uu) =\sup\{e \in \mathbb{Q}: \  e \  \text{is the exponent of a non-empty factor of $\uu$}\}\,.
$$
In addition, the {\em asymptotic critical exponent} $E^*(\uu)$, introduced by Cassaigne~\cite{Ca2008} under the name asymptotic index, 
is equal to $+\infty$,  if $E(\uu) = +\infty$, otherwise 
 $$E^*(\uu) =\lim_{n\to \infty}\sup\{e \in \mathbb{Q}: \  e \  \text{is the exponent of a factor of $\uu$ of length} \geq  n  \}\,.$$
It is obvious that $E^*(\uu)\leq E(\uu)$.

An important characteristic of a class of sequences is the infimum of critical exponents, resp. of asymptotic critical exponents, of sequences from this class.
We call this parameter {\em repetition threshold} for $C$ 
$$\RT(C) = \inf\{E(\uu): \uu \ \text{\ belongs to the class\ } C\}\,,$$
resp. {\em asymptotic repetition threshold} for $C$
$$\RT^*(C) = \inf\{E^*(\uu): \uu \ \text{\ belongs to the class\ } C\}\,.$$
Obviously, $\RT^*(C)\leq \RT(C)$. 

In this paper, we focus on the class $\mathcal{C}_d$ of $d$-ary sequences rich in palindromes.
For any rich sequence $\uu$, the inequality $E(\uu)\geq E^*(\uu)\geq 2$ holds; see~\cite{PStarosta2013}.

Consequently, we have a~lower bound
$$\RT(\mathcal{C}_d)\geq 2\,.$$

The repetition threshold of rich sequences has so far been determined only for binary and ternary sequences. The value of the repetition threshold for the binary alphabet  $ \RT(\mathcal{C}_2) =2+\frac{\sqrt{2}}{2} \approx 2.707\,$ was conjectured in \cite{BaSh19}, and proved by Currie, Mol, and Rampersad \cite{CuMoRa2020}. 
Recently, Currie, Mol, and Peltom{\"a}ki~\cite{CuMoPe2024} have shown that
$\RT(\mathcal{C}_3)=1 + \frac{1}{3-\mu}\approx 2.259$, where $\mu$ is the unique real root of the polynomial $x^3-2x^2-1$.

Currie, Mol, and Peltom{\"a}ki formulated in \cite{CuMoPe2024}:
\begin{conjecture}\label{con:jejichConjecture} 
    $\lim\limits_{d\to\infty} \RT(\mathcal{C}_{d}) =2$.
\end{conjecture}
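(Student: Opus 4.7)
Since the excerpt already establishes $\RT(\mathcal{C}_d) \geq 2$ for every $d$, the conjecture reduces to $\limsup_{d\to\infty}\RT(\mathcal{C}_d) \leq 2$, i.e.\ to exhibiting, for every $\varepsilon>0$ and all sufficiently large $d$, a rich $d$-ary sequence $\mathbf{u}_d$ with $E(\mathbf{u}_d) \leq 2+\varepsilon$. My plan is to construct such a family explicitly.

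The natural first candidate is the class of standard episturmian sequences, since every episturmian sequence is automatically palindromically rich and is parametrised by a directive sequence $\Delta_d$ over the $d$-letter alphabet. I would choose $\Delta_d$ in which every letter appears infinitely often (so that $\mathbf{u}_d$ is genuinely $d$-ary) and whose letters are as evenly spread as possible, so that the palindromic prefixes grow quickly and their return words become long; heuristically this should bound the exponent of any repetition in $\mathbf{u}_d$. If the episturmian route does not yield exponents approaching~$2$, an alternative is to start from an extremal low-exponent rich sequence on a smaller alphabet (for instance the ternary extremal sequence witnessing $\RT(\mathcal{C}_3)$ in \cite{CuMoPe2024}) and enlarge the alphabet by a rich-preserving morphism, or by inserting fresh letters at carefully chosen positions, verifying richness of the image via the complete-return-word criterion of Glen--Justin--Widmer. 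The critical exponent of $\mathbf{u}_d$ would then be estimated by the standard method: a repetition $u^e$ in a uniformly recurrent sequence corresponds to a return word to a bispecial palindromic factor, and these return words can be read off the defining directive or morphism, yielding a bound $E(\mathbf{u}_d) \leq f(d)$ for some explicit~$f$.

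The main obstacle is precisely this last step: obtaining a \emph{uniform} and \emph{quantitative} bound $f(d)$ that actually tends to~$2$, rather than settling at some value $2+c$ with $c>0$. For $d=2$ and $d=3$ the extremal constructions were pinned down by tight ad hoc analyses specific to the alphabet size, whereas here one needs a single family of constructions, and a single combinatorial analysis, working for every sufficiently large $d$; this entails controlling repetitions at every scale of $\mathbf{u}_d$ simultaneously and tracking how the largest attainable exponent decays as $d$ grows. A secondary subtlety is that whatever enlargement or interleaving scheme is used, the resulting $\mathbf{u}_d$ must remain rich on the \emph{full} $d$-letter alphabet, otherwise the construction only witnesses a bound for $\RT(\mathcal{C}_{d'})$ with $d'<d$.
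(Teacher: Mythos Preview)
Your proposal is a research outline, not a proof: you correctly isolate the task (build, for each large $d$, a rich $d$-ary sequence with critical exponent close to~$2$) and list plausible ingredients, but you carry out neither the construction nor the exponent estimate. The paragraph beginning ``The main obstacle is precisely this last step'' is an honest admission that the actual content of the theorem is missing. Exhibiting a family and proving $E(\uu_d)\to 2$ \emph{is} the conjecture; everything else in your write-up is orientation.

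Concretely, the episturmian route is a reasonable first guess but you give no argument that any choice of directive word yields $E(\uu_d)\to 2$; without a computation of return-word lengths versus bispecial-factor lengths this remains speculative. Your fallback idea of enlarging the alphabet of a known extremal rich sequence via a morphism is in fact what the paper does, but the whole difficulty lies in choosing the morphism so that (i) the image is still rich and (ii) the \emph{non-asymptotic} critical exponent does not jump above the asymptotic one. The paper resolves this by starting not from an episturmian sequence but from the fixed point $\uu_{2d+1}$ of the specific morphism $\varphi_{2d+1}$ in~\eqref{eq:morphismFi}, for which $E^*(\uu_{2d+1})\to 2$ was established earlier, and then applying a carefully designed ``weighted'' morphism $\pi$ (Definition~\ref{def:weight}) to a larger alphabet of size $d+2^d$. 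Richness of $\pi(\uu_{2d+1})$ is checked via the complete-return-word criterion (Proposition~\ref{prop:Projekce}), and the equality $E(\pi(\uu_{2d+1}))=E^*(\uu_{2d+1})$ is obtained by an explicit Parikh-vector analysis of every bispecial factor and its shortest return word (Sections~\ref{sec:palindrome}--\ref{sec:proofThm}), using the spectral data of the incidence matrix $M$. None of these steps is routine, and none appears in your proposal.
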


Since ${ \RT}(\mathcal{C}_{d+1}) \leq {\RT(\mathcal{C}_d)} $, it is clear that the limit $\lim\limits_{d\to\infty} \RT(\mathcal{C}_{d}) $ exists. In this paper, we confirm the validity of their conjecture.

\medskip
In collaboration with Klouda~\cite{DvKlPe2025}, we defined a~$d$-ary sequence $\uu_d$ for every $d \in \N$, $d\geq 3$ as a fixed point of the morphism 
\begin{equation}\label{eq:morphismFi}\varphi_{d}:\left\{\begin{array}{rrcl} 
&  0 & \to &   01 \\
&  1 & \to &   02 \\
&   2 & \to &   03 \\
& ~\vdots\, && ~\vdots \\
&   d-2 & \to &   0(d-1) \\
&   d-1 & \to &   0(d-1)(d-1)\,. 
\end{array}\right.\end{equation}

We used these sequences to show that the asymptotic repetition threshold for the class of rich sequences does not depend on the size of the alphabet. In particular, we proved that $$\RT^*(\mathcal{C}_{d}) =2= \lim\limits_{n\to\infty}E^*(\uu_n) \quad \text{for every $d \in \N, d\geq 2$}. $$

In this paper, we prove the following theorem. 

   \begin{theorem}\label{thm:hlavni} For every $d \in \N, d\geq 3$, there exists a morphism $\pi: \{0,1,\dots, 2d\}^* \to {\mathcal B}^*$ with $\#\mathcal B=d+2^d$ such that the sequence  $\pi(\uu_{2d+1})$ is rich and \begin{equation}\label{eq:vHlavnim}
   E\bigl(\pi(\uu_{2d+1})\bigr) = E^*(\uu_{2d+1}).\end{equation}
\end{theorem}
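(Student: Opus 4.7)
The plan is to combine a detailed analysis of the repetition structure of $\uu_{2d+1}$ with a carefully chosen letter coding $\pi$. Since $E^*(\uu_{2d+1})$ measures the supremum of exponents of arbitrarily long factors, while $E(\uu_{2d+1})$ measures the overall supremum, and since these two quantities differ only because of ``short'' extremal factors, equality can be forced by breaking the finitely many extremal occurrences, provided the chosen coding neither creates new high-exponent factors nor destroys richness.

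First, I would classify the factors $w$ of $\uu_{2d+1}$ with exponent strictly greater than $E^*(\uu_{2d+1})$. Using the substitutive structure coming from $\varphi_{2d+1}$, these should have bounded length and small periods, so they can be enumerated explicitly; the analysis parallels the description of runs of large exponent carried out for $\uu_d$ in \cite{DvKlPe2025}. Second, I would define $\pi$ as a coloring that distinguishes repeated occurrences inside these extremal factors. The target alphabet size $d+2^d$ strongly suggests that $\pi$ keeps $d$ letters unchanged and replaces the remaining $d+1$ letters by one of $2^d$ variants indexed by a length-$d$ contextual fingerprint (say, the subset of letters appearing in a short window around the position). With a suitable choice of the fingerprint, in every repetition $z^e$ with $e > E^*(\uu_{2d+1})$ the colored images of successive occurrences of $z$ would differ at some position, so the exponent of the colored factor would drop to at most $E^*(\uu_{2d+1})$. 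Third, I would verify that $\pi(\uu_{2d+1})$ is rich -- most naturally via the complete-return-word characterisation of \cite{GlJuWi2009}, by lifting each complete return word in $\pi(\uu_{2d+1})$ to a palindromic complete return word in $\uu_{2d+1}$ -- and that no new repetition of exponent exceeding $E^*(\uu_{2d+1})$ is created, which follows from the fact that any repetition in $\pi(\uu_{2d+1})$ projects to a repetition in $\uu_{2d+1}$ of at least the same exponent.

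The main obstacle will be the third step: simultaneously preserving richness and respecting the exponential alphabet size $d+2^d$. The factor $2^d$ indicates that the coloring records a $d$-bit local signature, and one must verify that this signature is fine enough to break every extremal repetition, yet sufficiently symmetric to respect the palindromic complete-return-word structure inherited from $\uu_{2d+1}$. I expect the most delicate technical work to lie in showing that for every palindromic complete return word $r$ in $\pi(\uu_{2d+1})$, the contextual fingerprints attached to mirror-image letters inside $r$ coincide, so that $r$ remains a palindrome in the enlarged alphabet $\mathcal{B}$.
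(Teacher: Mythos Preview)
Your proposal rests on a misconception about what $\pi$ is. You describe a \emph{context-dependent recoloring}: keep $d$ letters and replace each of the other $d+1$ letters by one of $2^{d}$ variants chosen according to a local window. Such a map is not a morphism $\pi:\{0,\dots,2d\}^*\to\mathcal B^*$; a morphism must send each letter to a single fixed word, independently of its surroundings. The theorem requires a genuine morphism, and the paper's $\pi$ is one: it sends the $i$-th letter to a fixed palindrome of length $h_i$, where $\vec h=(1,\dots,1,2^0,2^1,\dots,2^d)$, using pairwise disjoint sets of new letters. The number $d+2^d$ is simply the total count of distinct letters needed to write these $2d+1$ palindromes (the first $d$ contribute one letter each, the last $d+1$ contribute $1+1+2+4+\cdots+2^{d-1}=2^d$); it has nothing to do with a $d$-bit fingerprint.

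Because of this, your entire mechanism for lowering the critical exponent is off track. The paper does not ``break'' finitely many short extremal repetitions by recoloring. Instead, it uses Theorem~\ref{thm:FormulaForE}: for a bispecial factor $w$ with shortest return word $r$ in $\uu_{2d+1}$, the corresponding ratio in $\pi(\uu_{2d+1})$ is $\vec h\cdot\vec w/\vec h\cdot\vec r$, and one must show this is at most $1/(3-\Lambda)$ for \emph{every} bispecial factor, not only the short ones. The crucial point is spectral: the incidence matrix $M$ of $\varphi_{2d+1}$ has two eigenvalues outside the unit circle, $\Lambda\in(2,3)$ and $\beta\in(1,2)$, and the specific weight vector $\vec h$ is orthogonal to the right eigenvector for $\beta$. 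This kills the $\beta^n$ terms in the asymptotic expansion of $\vec h M^n\vec e_{2d}$, leaving only the dominant $\Lambda^n$ term and exponentially small corrections, which is exactly what makes the uniform bound $\vec h\cdot\vec w/\vec h\cdot\vec r\le 1/(3-\Lambda)$ go through for the infinite families of palindromic bispecials. Your outline contains no analogue of this spectral ingredient, and without it there is no reason the supremum over all bispecials should equal the limiting value. Your step~3 claim that ``any repetition in $\pi(\uu_{2d+1})$ projects to a repetition in $\uu_{2d+1}$ of at least the same exponent'' is also false for non-uniform morphisms: stretching letters unevenly changes exponents in both directions, which is precisely the phenomenon the paper exploits.
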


The above theorem implies that $\RT({\mathcal C}_{d+2^d})\leq E^*(\uu_{2d+1})$, thus the validity of Conjecture \ref{con:jejichConjecture} is a~straightforward consequence.

\medskip

Let us emphasize an interesting feature of the sequences $\uu_{3}$ and $\uu_5$, namely 
\begin{equation}\label{eq:zname}
\RT(\mathcal{C}_2) = E^*(\uu_3)\qquad \text{and}\qquad RT(\mathcal{C}_3) = E^*(\uu_5).
    \end{equation}
In addition, the estimate $2.117< \RT(\mathcal{C}_4) < 2.12$  announced in \cite{CuMoPe2024} and the value $E^*(\uu_{7}) \doteq 2.119 71968$ together with \eqref{eq:zname} suggest that 
\begin{conjecture}\label{con:opet}  
   $\RT(\mathcal{C}_{d}) = E^*(\uu_{2d-1}) $ for every $d \in \N$, $d\geq 2$.
\end{conjecture}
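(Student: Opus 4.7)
The plan is to prove Conjecture~\ref{con:opet} by separately establishing the matching upper and lower bounds $\RT(\mathcal{C}_d) \leq E^*(\uu_{2d-1})$ and $\RT(\mathcal{C}_d) \geq E^*(\uu_{2d-1})$. The upper bound would follow from a sharpened version of Theorem~\ref{thm:hlavni}: for each $d\geq 2$, construct a morphism $\pi_d$ sending the alphabet $\{0,1,\dots,2d-2\}$ of $\uu_{2d-1}$ into a $d$-letter alphabet $\mathcal A_d$, such that $\pi_d(\uu_{2d-1})$ is rich and $E\bigl(\pi_d(\uu_{2d-1})\bigr) = E^*(\uu_{2d-1})$. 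Theorem~\ref{thm:hlavni} already accomplishes this with an alphabet of size $d+2^d$, and the cases $d=2,3$ in \eqref{eq:zname} realize the tight bound $|\mathcal A_d|=d$, so the goal is to replace the exponential blow-up $2^d$ by a bounded overhead.

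For the upper bound, the idea is to pick out, from the coding used to prove Theorem~\ref{thm:hlavni}, exactly those letters that are needed. First I would identify the structure of the extremal factors realising $E^*(\uu_{2d-1})$; by the $S$-adic/morphic construction via $\varphi_{2d-1}$ these extremal factors should be describable as $\varphi_{2d-1}^n(w)$-type windows around a short seed. Second, I would design $\pi_d$ so that (i) no palindrome of $\uu_{2d-1}$ is collapsed to a non-palindrome, giving richness via the complete return word characterisation of \cite{GlJuWi2009}; and (ii) no new short high-exponent factor is created. Concretely, I expect $\pi_d$ to act non-trivially only on the ``duplicate'' letters $d,d+1,\dots,2d-2$, folding them back onto $1,\dots,d-1$ in a palindrome-preserving way, while the letter $0$ (whose role is universal in the morphism $\varphi$) is kept fixed. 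Richness would then be verified by checking complete return words to the palindromic factors of $\pi_d(\uu_{2d-1})$, and the equality of exponents would come from the desubstitution technique: every factor of $\pi_d(\uu_{2d-1})$ lifts uniquely to a factor of $\uu_{2d-1}$, so its exponent is controlled by the known value $E^*(\uu_{2d-1})$.

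The lower bound is the main obstacle. One must show that every $d$-ary rich sequence $\vvv$ satisfies $E(\vvv) \geq E^*(\uu_{2d-1})$. For $d=2,3$ the proofs in \cite{CuMoRa2020, CuMoPe2024} already require substantial case analysis on bispecial factor extensions in the Rauzy graphs, together with an explicit list of forbidden factors. A general-$d$ argument will need a uniform combinatorial mechanism producing a repetition of exponent $\geq E^*(\uu_{2d-1})$ from the palindromic richness constraint alone. The most promising route, in my opinion, is an inductive one: show that every rich $d$-ary sequence either already contains a repetition of exponent $\geq E^*(\uu_{2d-1})$, or else embeds (after a bounded desubstitution) into a rich $(d-1)$-ary sequence, thereby propagating the bound from the previously-known cases. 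A competing approach is to analyse directly the Rauzy graphs of rich sequences: richness forces many palindromic bispecial factors, and counting their extensions on $d$ letters should force, in the long run, a factor whose exponent approaches $E^*(\uu_{2d-1})$ from above. Either way, carrying the analysis uniformly in $d$ is the central difficulty, and I would expect the first complete proof to proceed by combining a clean upper-bound construction with a non-trivial strengthening of the Rauzy-graph arguments of \cite{CuMoRa2020, CuMoPe2024}.
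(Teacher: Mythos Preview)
The statement you are trying to prove is \emph{not proved in the paper}: it is Conjecture~\ref{con:opet}, explicitly left open. The authors write that ``a proof of Conjecture~\ref{con:opet} for a general $d$ is currently beyond our capabilities''. Consequently there is no ``paper's own proof'' to compare against, and what you have written is not a proof either but a research outline with both halves left essentially unfinished.

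On the upper bound, your concrete suggestion---folding the letters $d,d+1,\ldots,2d-2$ back onto $1,\ldots,d-1$ while fixing $0$---is too naive and almost certainly fails. Such a letter-to-letter identification produces a morphism $\pi_d$ with $|\pi_d(i)|=1$ for every $i$, i.e.\ length vector $(1,1,\ldots,1)$. But Section~\ref{sec:comments} of the paper shows (for $\uu_{2d+1}$, and the same mechanism applies to $\uu_{2d-1}$) that any morphism $\eta$ achieving $E(\eta(\uu_{2d-1}))=E^*(\uu_{2d-1})$ must have its length vector orthogonal to the eigenvector of the incidence matrix associated with the second Pisot root $\beta$; the all-ones vector does not satisfy this. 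Moreover, identifying letters in this way will typically create new short factors with large exponent (for instance, images of factors like $j\,0\,k$ with $j,k$ identified become squares) and will in general destroy richness, since the complete return words to the collapsed letters need not remain palindromes. The paper's Section~\ref{sec:comments} indicates that the right morphisms to look for are in the class $P_{\mathrm{ret}}$ with carefully chosen, non-constant image lengths---not letter identifications.

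On the lower bound you correctly identify the main obstacle, but neither of your proposed routes is developed beyond a sentence. The ``desubstitution to a rich $(d-1)$-ary sequence'' induction would need a concrete mechanism explaining why avoiding exponent $E^*(\uu_{2d-1})$ forces such a reduction, and none is given; the Rauzy-graph counting idea is likewise only a hope. In short, your write-up is a reasonable statement of where the difficulties lie, but it contains no actual argument for either inequality, and the one explicit construction you propose for the upper bound is incompatible with the necessary conditions the paper itself derives.
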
  
Determining the value $\RT(\mathcal{C}_{d})$ for the parameter $d=3$ required intensive computer support. Therefore, it seems that a proof of Conjecture \ref{con:opet} for a~general $d$ is currently beyond our capabilities.

\medskip

The paper is organized as follows. In Preliminaries, we recall the notions and theorems we use. In Sections \ref{sec:palindrome}
 and \ref{sec:nonpalindrome}, we derive properties of palindromic and non-palindromic  bispecial factors of $\uu_{2d+1}$.  The morphism $\pi$ which plays an important role in Theorem \ref{thm:hlavni} is introduced in Section \ref{sec:pi}, where we also demonstrate that the image of $\uu_{2d+1}$ under this morphism is rich. The proof of Theorem \ref{thm:hlavni}  itself is the content of Section \ref{sec:proofThm}. In Section \ref{sec:comments}, we discuss properties of other morphisms that could lead to a proof of Conjecture \ref{con:opet}. The appendix contains simple facts on the eigenvalues of the matrix of the morphism fixing $\uu_{2d+1}$.

\section{Preliminaries}
An \textit{alphabet} $\mathcal{A}$ is a finite set of symbols, called \textit{letters}. 
A \textit{word} $u$ over $\mathcal A$ is a finite string of letters from $\mathcal A$. The length of $u$ is denoted $|u|$ and $|u|_{i}$ denotes the number of occurrences of the letter ${i}$ in the word $u$. The \textit{Parikh vector} $ \vec{u}$ is the vector whose $i$-th component equals $|u|_{i}$. The set of all finite words over $\A$ is denoted $\A^*$. The set $\A^*$ equipped with concatenation as the operation forms a monoid with the \textit{empty word} $\varepsilon$ as the neutral element. Consider $u, p, s, v \in \A^*$ such that $u=pvs$, then the word $p$ is called a \textit{prefix}, the word $s$ a \textit{suffix} and the word $v$ a \textit{factor} of $u$. 

A~\textit{sequence} $\uu$ over $\A$ is an infinite string $\uu = u_0 u_1 u_2 \cdots$ of letters $u_j \in \A$ for all $j \in \N$. A \textit{word} $v$ over $\mathcal A$ is called a~\textit{factor} of the sequence $\uu = u_0 u_1 u_2 \cdots$ if there exists $j \in \mathbb N$ such that $v= u_j u_{j+1} u_{j+2} \cdots u_{j+|w|-1}$. The integer $j$ is called an \textit{occurrence} of the factor $v$ in the sequence $\uu$. If $j=0$, then $v$ is a \textit{prefix} of $\uu$.

The set of factors occurring in a~sequence $\uu$ forms the \textit{language} $\mathcal{L}(\uu)$ of $\uu$.
The language $\mathcal{L}(\uu)$ is said to be \textit{closed under reversal} if for each factor $v=v_0v_1\cdots v_{n-1}$, its \textit{mirror image} $\overline{v}=v_{n-1}\cdots v_1 v_0$ is also a factor of $\uu$.
A~factor $w$ of a sequence $\uu$ is \textit{left special} if $iw, jw \in \mathcal{L}(\uu)$ for at least two distinct letters ${i, j} \in \A$. A \textit{right special} factor is defined analogously. A factor is called \textit{bispecial} if it is both left and right special. The set of left extensions is denoted $\mathrm{Lext}(w)$, i.e., $\mathrm{Lext}(w)=\{iw \in {\mathcal L}(\uu)\ : \ i \in {\mathcal A}\}$. Similarly, $\mathrm{Rext}(w)=\{wi \in {\mathcal L}(\uu) \ :\ i \in {\mathcal A}\}$. The set of both-sided extensions is denoted $\mathrm{Bext}(w)=\{iwj \in {\mathcal L}(\uu)\ :\ i,j \in {\mathcal A}\}$. The \textit{bilateral order} $\mathrm{b}(w)$ of $w\in {\mathcal L}(\uu)$ is defined 
$$\mathrm{b}(w)=\#\mathrm{Bext}(w)-\#\mathrm{Lext}(w)-\#\mathrm{Rext}(w)+1\,.$$

A sequence $\uu$ is \textit{recurrent} if each factor of $\uu$ has at least two occurrences in $\uu$. Moreover, a recurrent sequence $\uu$ is \textit{uniformly recurrent} if the distances between consecutive occurrences of each factor in $\uu$ are bounded. If a uniformly recurrent sequence $\uu$ contains infinitely many palindromic factors, then its language ${\mathcal L}(\uu)$ is closed under reversal. 
A~sequence $\uu$ is \textit{eventually periodic} if there exist words $x \in \A^*$ and $v \in \A^* \setminus \{\varepsilon\}$ such that $\uu$ can be written as $\uu = xvvv \cdots = xv^\omega$. If $\uu$ is not eventually periodic, $\uu$ is called \textit{aperiodic}.

Consider a factor $v$ of a recurrent sequence $\uu = u_0 u_1 u_2 \cdots$. Let $j < \ell$ be two consecutive occurrences of $v$ in $\uu$. Then the word $u=u_j u_{j+1} \cdots u_{\ell-1}$ is a \textit{return word} to $v$ in $\uu$ and $uv$ is a \textit{complete return word} to $v$ in $\uu$.

A word $p$ is a \textit{palindrome} if $p$ is equal to its mirror image, i.e., $p=\overline{p}$.
If $p$ is a~palindromic factor of a sequence $\uu$ over $\mathcal A$, then the set of palindromic extensions of $p$ is $\mathrm{Pext}(p)=\{ipi \in {\mathcal L}(\uu)\ : \ i \in {\mathcal A}\}$. Every word $u$ contains at most $|u|+1$ palindromic factors (including the empty word). If $u$ contains $|u|+1$ palindromic factors, $u$ is said to be {\em rich}. A~sequence $\uu$ is rich if each of its factors is rich. 

We use the following equivalent characterizations of richness~\cite{GlJuWi2009, BaPeSt2010}. 
\begin{theorem}\label{thm:richness_eqv}
Let $\uu$ be a sequence with language closed under reversal. The following statements are equivalent.
\begin{enumerate}
\item $\uu$ is rich;
\item every complete return word to a palindromic factor of $\uu$ is a palindrome;
\item every bispecial factor $w$ of $\uu$ satisfies
\begin{itemize}
\item ${\mathrm b}(w)=0$ if $w$ is not a palindrome;
\item ${\mathrm b}(w)=\# {\mathrm Pext}(w)-1$ if $w$ is a palindrome.
\end{itemize}
\end{enumerate}
\end{theorem}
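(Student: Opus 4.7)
The plan is to establish the three-way equivalence via $(1) \Leftrightarrow (2)$ and $(1) \Leftrightarrow (3)$. The central technical notion throughout is the \emph{longest palindromic suffix} $\mathrm{lps}(u)$ of a finite word $u$, together with the backbone lemma (essentially Droubay--Justin--Pirillo): a finite word $u$ is rich if and only if for every nonempty prefix $v$ of $u$, $\mathrm{lps}(v)$ occurs exactly once in $v$ (is \emph{unioccurrent}). This reformulation converts richness into a letter-by-letter growth condition on the lps of prefixes, which is what all subsequent implications exploit.

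For $(1) \Leftrightarrow (2)$: Assume $\uu$ is rich and let $r$ be a complete return word to a palindromic factor $p$. Choose an occurrence of $r$ in a prefix $u$ of $\uu$ ending exactly at the second occurrence of $p$ inside $r$. The backbone lemma forces $\mathrm{lps}(u)$ to be unioccurrent in $u$; combined with the fact that the final copy of $p$ is itself a palindromic suffix of $u$, a short overlap argument on palindromes shows $\mathrm{lps}(u)=r$, so $r$ is a palindrome. Conversely, assuming (2), suppose for contradiction that some prefix $v$ of $\uu$ is not rich and pick the shortest such; then $\mathrm{lps}(v)$ occurs at least twice in $v$, and the complete return word between two consecutive occurrences of $\mathrm{lps}(v)$ is by hypothesis a palindrome. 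This manufactured palindrome sits inside $v$ and is strictly longer than $\mathrm{lps}(v)$ by construction, contradicting maximality of the lps.

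For $(1) \Leftrightarrow (3)$: I would invoke the defect identity of Balkov\'a--Pelantov\'a--Starosta, valid for any sequence whose language is closed under reversal,
\begin{equation*}
\mathcal{C}(n+1) - \mathcal{C}(n) + 2 - \mathcal{P}(n) - \mathcal{P}(n+1) \;=\; \sum_{\substack{w \text{ bispecial}\\ |w|=n}} \bigl(\mathrm{b}(w) - \delta(w)\bigr),
\end{equation*}
where $\delta(w)=\#\mathrm{Pext}(w)-1$ when $w$ is a palindrome and $\delta(w)=0$ otherwise, and each summand is non-negative (every palindromic extension $ipi$ contributes to $\mathrm{Bext}(w)$, giving $\mathrm{b}(w)\geq\delta(w)$). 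By the characterization of Bucci--de Luca--Glen--Zamboni (reference \cite{BuLuGlZa2009}), $\uu$ is rich iff the left-hand side vanishes for every $n$; by non-negativity, this is equivalent to every summand vanishing, which is exactly condition (3).

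The main obstacle is $(1) \Rightarrow (2)$: ruling out the possibility that $\mathrm{lps}(u)$ is strictly shorter than $r$ but longer than $p$ (hence nested strictly inside $r$) requires a careful case analysis of overlaps between palindromes of different lengths, together with using unioccurrence of the lps in every intermediate prefix. The $(1) \Leftrightarrow (3)$ direction, in contrast, is largely bookkeeping once the defect identity and the bound $\mathrm{b}(w)\geq\delta(w)$ are in hand, so the true combinatorial content of the theorem lives in the palindromic-return-word analysis of the first equivalence.
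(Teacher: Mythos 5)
First, a framing note: the paper does not prove this theorem at all --- it is imported from the literature with the citation \cite{GlJuWi2009, BaPeSt2010} --- so your proposal can only be measured against those standard proofs. Two of your four implications are sound: the $(2)\Rightarrow(1)$ descent (shortest non-rich prefix, palindromic complete return to its lps as a longer palindromic suffix) is exactly the argument of \cite{GlJuWi2009}, and $(3)\Rightarrow(1)$ does follow by telescoping the (corrected) identity and invoking \cite{BuLuGlZa2009}. But both remaining directions contain genuine errors. For $(1)\Rightarrow(2)$, your central claim $\mathrm{lps}(u)=r$ is simply false: take the rich sequence $\uu=(abac)^\omega$, the palindrome $p=a$, and the occurrence of the complete return word $r=aba$ ending at position $7$; then $u=abacaba$ and $\mathrm{lps}(u)=abacaba\neq r$. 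You flag the case $|\mathrm{lps}(u)|<|r|$ as the main obstacle, but that case is the easy one (a copy of $p$ strictly between consecutive occurrences, or $\mathrm{lps}(u)=p$ violating unioccurrence); the case you never address, $|\mathrm{lps}(u)|>|r|$, is where the prefix-based formulation breaks, and no overlap argument can rescue a false claim. The standard repair, essentially the proof in \cite{GlJuWi2009}, is to use that richness is inherited by factors and apply your backbone lemma to the factor $r$ itself: $s=\mathrm{lps}(r)$ ends with $p$, hence being a palindrome begins with $p$; since $r$ contains exactly two occurrences of $p$ (at positions $0$ and $|r|-|p|$), either $s=p$, contradicting unioccurrence of $s$ in $r$, or $s=r$, so $r$ is a palindrome.

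For $(1)\Rightarrow(3)$ the gap is worse. Setting $T(n)=\mathcal C(n+1)-\mathcal C(n)+2-\mathcal P(n)-\mathcal P(n+1)$, your displayed identity is misstated (the correct per-length form is $T(n+1)-T(n)=\sum_{|w|=n}\bigl(\mathrm{b}(w)-\delta(w)\bigr)$; your left-hand side is the telescoped sum over all shorter bispecials), but that is reparable. Fatal is the parenthetical non-negativity claim: $\mathrm{Pext}(w)\subseteq\mathrm{Bext}(w)$ only yields $\mathrm{b}(w)-\delta(w)\geq 2-\#\mathrm{Lext}(w)-\#\mathrm{Rext}(w)$, which for a bispecial factor can be as low as $-2$, and summands really can be negative within the theorem's hypotheses. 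Concretely, the periodic sequence $(01102112)^\omega$ has language closed under reversal (the reversal $21120110$ of the period is a cyclic shift of it), and $w=11$ is a palindromic bispecial factor occurring only inside the blocks $0110$ and $2112$, so $\mathrm{Bext}(w)=\{0(11)0,\,2(11)2\}$, giving $\mathrm{b}(w)=2-2-2+1=-1$ while $\#\mathrm{Pext}(w)-1=1$: a summand equal to $-2$. (This sequence is of course not rich, but it satisfies the hypothesis of the theorem, which is all your positivity claim was allowed to use.) Consequently, from $T\equiv 0$ you cannot conclude that each summand vanishes, and $(1)\Rightarrow(3)$ --- which you dismiss as ``largely bookkeeping'' --- is precisely where \cite{BaPeSt2010} must work harder, using the structure of Rauzy graphs at equality in the inequality of \cite{BaMaPe2007} rather than term-by-term positivity. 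In short: the true combinatorial content is not confined to the return-word equivalence as you suggest, and both $(1)\Rightarrow(2)$ and $(1)\Rightarrow(3)$ need repairs, the first minor, the second substantial.
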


A \textit{morphism} is a map $\psi: \A^* \to \B^*$ such that $\psi(uv) = \psi(u)\psi(v)$ for all words $u, v \in \A^*$.
The morphism $\psi$ can be naturally extended to a sequence $\uu=u_0 u_1 u_2\cdots$ over $\A$ by setting
$\psi(\uu) = \psi(u_0) \psi(u_1) \psi(u_2) \cdots\,$.
Consider a factor $w$ of $\psi({\uu})$. We say that $(w_1, w_2)$ is a \emph{synchronization point} of $w$ with respect to $\psi$ if $w=w_1w_2$ and for all $p,s \in {\mathcal L}(\psi({\uu}))$ and $v \in {\mathcal L}({\uu})$ such that $\psi(v)=pws$ there exists a factorization $v=v_1v_2$ of $v$ with $\psi(v_1)=pw_1$ and $\psi(v_2)=w_2s$.

A \textit{fixed point} of a morphism $\psi:  \A^* \to  \A^*$ is a sequence $\uu$ such that $\psi(\uu) = \uu$.
We associate to a morphism $\psi: \A^* \to  \A^*$ the \textit{incidence matrix} $M_\psi$ defined for each $i,j \in \mathcal A$ as $(M_\psi)_{ij}=|\psi(j)|_{i}$. 
A morphism $\psi$ is \textit{primitive} if the matrix $M_\psi$ is primitive, i.e., there exists $k\in \mathbb N$ such that $M_\psi^k$ is a positive matrix.   


 \medskip 
Let ${\uu}$ be a sequence over $\mathcal A$. Then the \textit{uniform frequency} of the letter ${i}\in \mathcal A$ is equal to $f_i$ if for any sequence $(v_{n})$ of factors of ${\uu}$ with increasing lengths 
$$f_i=\lim_{n\to \infty}\frac{|v_{n}|_{i}}{|v_{n}|}\,.$$
It is known that fixed points of primitive morphisms have uniform letter frequencies~\cite{Quef87}.
In order to compute the (asymptotic) critical exponent of sequences, we use the following theorems.~\footnote{Foster et al.~\cite{FoSaVa2025} have recently introduced a~modified version of Theorem~\ref{thm:E*_morphic_image}.}
\begin{theorem}[\cite{DolceDP2023}, Theorem 3]\label{thm:FormulaForE}
Let $\uu$ be a uniformly recurrent aperiodic sequence.
Let $(w_n)_{n\in\N}$ be the sequence of all bispecial factors in $\uu$ ordered by length.
For every $n \in \N$, let $r_n$ be the shortest return word to the bispecial factor $w_n$ in $\uu$.
Then
$$
E(\uu) = 1 + \sup\left\{\frac{|w_n|}{|r_n|}\ : \ n\in \mathbb N \right\}\,.
$$
\end{theorem}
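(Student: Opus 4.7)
My plan is to prove the formula by establishing the two inequalities separately.

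For the easy direction $E(\uu) \ge 1 + \sup_n |w_n|/|r_n|$, fix a bispecial factor $w = w_n$ with shortest return word $r = r_n$. By definition there are consecutive occurrences of $w$ in $\uu$ at positions $j$ and $j + |r|$, so the factor of $\uu$ of length $|r|+|w|$ starting at position $j$ contains $w$ both at position $0$ and at position $|r|$. A letter-by-letter comparison shows that this factor has period $|r|$; since its prefix of length $|r|$ is $r$, the whole factor equals the prefix of $r^\omega$ of length $|r|+|w|$, which has exponent at least $1 + |w|/|r|$. Taking the supremum over $n$ yields the bound.

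The harder direction is $E(\uu) \le 1 + \sup_n |w_n|/|r_n|$. Fix any factor $z$ of $\uu$ of exponent $e > 1$, written as a prefix of $u^\omega$ of length $e|u|$, where $|u|$ is the minimal period of $z$. I extend $z$ in both directions inside $\uu$ as far as possible while preserving the period $|u|$, obtaining a factor $\tilde z$ of exponent $\tilde e \ge e$; the extension is finite, since otherwise $u^\omega$ would appear as a right-infinite factor of $\uu$ and $\uu$ would be eventually periodic. Let $\tilde y$ denote the prefix of $\tilde z$ of length $|\tilde z|-|u|$; by the period structure, $\tilde y$ is also a suffix of $\tilde z$, so it occurs at positions $0$ and $|u|$ within $\tilde z$, showing that $u$ itself is a return word to $\tilde y$ in $\uu$. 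Consequently the shortest return word $r_{\tilde y}$ satisfies $|r_{\tilde y}| \le |u|$, and
\[
\frac{|\tilde y|}{|r_{\tilde y}|} \;\ge\; \frac{|\tilde y|}{|u|} \;=\; \tilde e - 1 \;\ge\; e - 1.
\]

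The key remaining point is that $\tilde y$ is bispecial. Maximality of $\tilde z$ on the right says that any letter $a$ with $\tilde z \cdot a \in \mathcal{L}(\uu)$ must satisfy $a \ne \tilde z[|\tilde z|-|u|]$, for otherwise $\tilde z \cdot a$ would preserve period $|u|$ and strictly extend $\tilde z$. Such an $a$ exists because $\uu$ is uniformly recurrent (so every occurrence of $\tilde z$ admits a continuation), and the two occurrences of $\tilde y$ inside $\tilde z$ are then followed in $\uu$ by two distinct letters (namely $\tilde z[|\tilde z|-|u|]$ from the occurrence at position $0$, and $a$ from the occurrence at position $|u|$), so $\tilde y$ is right special; the symmetric argument on the left gives left speciality. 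Hence $\tilde y$ equals some $w_n$, and applying the construction to a sequence of factors $z$ whose exponents approach $E(\uu)$ gives $\sup_n |w_n|/|r_n| \ge E(\uu) - 1$, completing the proof. The main obstacle I anticipate is precisely this bispeciality argument: the maximality of $\tilde z$ has to be exploited carefully at both boundaries to force a genuine two-way branching of $\tilde y$ rather than the weaker property that $\tilde z$ fails to extend periodically, and uniform recurrence is used exactly once to guarantee that non-periodic extensions of $\tilde z$ actually exist.
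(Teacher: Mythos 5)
The paper itself gives no proof of this statement: it is imported verbatim from \cite{DolceDP2023} (Theorem 3), so there is no internal argument to compare against; judged on its own, your proposal reconstructs what is essentially the standard proof. The easy direction is correct: the complete return word $r_nw_n$ contains $w_n$ at positions $0$ and $|r_n|$, hence has period $|r_n|$ and exponent at least $1+|w_n|/|r_n|$. In the converse direction your bookkeeping is also right, up to one overstatement: the two occurrences of $\tilde y$ at distance $|u|$ inside $\tilde z$ need not be \emph{consecutive} occurrences in $\uu$, and the connecting word of length $|u|$ is in general only a conjugate of $u$, so ``$u$ itself is a return word to $\tilde y$'' is not literally true; but all you use is $|r_{\tilde y}|\le |u|$, which does follow, since consecutive occurrences of $\tilde y$ at distance at most $|u|$ must exist between those two occurrences.

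The one genuine seam is in the bispeciality argument, where you silently switch between two notions of maximality. You construct $\tilde z$ by extending a \emph{fixed occurrence} of $z$ as far as possible, and your finiteness argument (an infinite right extension makes a tail of $\uu$ periodic) is correct exactly under that reading. But your right-speciality step claims that \emph{every} letter $a$ with $\tilde z a\in\mathcal L(\uu)$ breaks the period; that follows only if $\tilde z$ is longest among \emph{all} factors of $\uu$ with period $|u|$ containing $z$ (maximality in the language), and under that reading the finiteness argument needs repair: unbounded $|u|$-periodic factors do not directly yield a periodic tail, and aperiodicity alone does not exclude them (consider $0\,1\,00\,1\,000\,1\cdots$) --- there you must invoke uniform recurrence, which forces $\mathcal L(\uu)\subseteq\mathcal L(u'^{\omega})$ and hence eventual periodicity. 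Either reading can be completed. With occurrence-maximality you in fact only need the single letter following (resp.\ preceding) \emph{your} occurrence of $\tilde z$ to break the period, which is precisely what maximality gives, so the sweeping claim is unnecessary; but then you must handle the boundary case in which the maximal left extension of your occurrence reaches position $0$ of the one-sided sequence, where no preceding letter exists: pick a later occurrence (recurrence), and note that if every occurrence's maximal left extension reached position $0$, arbitrarily long prefixes of $\uu$ would be $|u|$-periodic and $\uu$ would be periodic. Incidentally, your single appeal to uniform recurrence is placed where it is not needed (in a one-sided sequence every occurrence has a successor letter); it is the left boundary that requires it. These are one-line patches, so the proof is essentially correct and follows the expected route.
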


\begin{theorem}[\cite{Ochem}, Theorem 9]\label{thm:E*_morphic_image} Let ${\bf v}$ be a sequence over an alphabet $\mathcal A$ such that the uniform letter frequencies in ${\bf v}$ exist. Let $\psi:{\mathcal A}^* \to {\mathcal B}^*$ be an injective morphism. Assume there exists $L \in \mathbb N$ such that every factor $x$ of $\psi({\bf v})$, $|x|\geq L$, has a~synchronization point. 
Then $E^*({{\bf v}})=E^*(\psi({{\bf v}}))$.
\end{theorem}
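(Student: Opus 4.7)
My overall approach is to realize the morphism $\pi$ as a contextual recoloring of $\uu_{2d+1}$: letters situated near dangerous short repetitions receive distinct images, while the global combinatorial structure responsible for $E^*(\uu_{2d+1})$ is preserved verbatim. The target alphabet size $d+2^d$ suggests splitting the codomain into two blocks, for instance $d$ letters that merely copy a base symbol and $2^d$ auxiliary letters indexed by subsets of $\{0,1,\dots,d-1\}$ to tag local palindromic context. I would design $\pi$ as a $k$-block morphism (with $k$ bounded by a small constant) and require two structural properties: $\pi$ is injective, and every sufficiently long factor of $\pi(\uu_{2d+1})$ admits a synchronization point.

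Richness of $\pi(\uu_{2d+1})$ would be verified through the bilateral-order characterization of Theorem~\ref{thm:richness_eqv}(3). Using the explicit classification of bispecial factors of $\uu_{2d+1}$ from Sections~\ref{sec:palindrome} and~\ref{sec:nonpalindrome}, together with synchronization, every sufficiently long bispecial factor of $\pi(\uu_{2d+1})$ can be traced back to a bispecial factor of $\uu_{2d+1}$, so that its bilateral order is computable from the source. The recoloring must be calibrated so that each palindromic bispecial factor of $\uu_{2d+1}$ maps to a palindrome with $\mathrm{b}(W)=\#\mathrm{Pext}(W)-1$, and each non-palindromic bispecial factor maps to a factor of bilateral order $0$. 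The language being closed under reversal in the image is inherited from $\uu_{2d+1}$ provided $\pi$ commutes appropriately with mirror image on the relevant factors.

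For the identity $E(\pi(\uu_{2d+1}))=E^*(\uu_{2d+1})$ in Section~\ref{sec:proofThm}, I would combine Theorems~\ref{thm:FormulaForE} and~\ref{thm:E*_morphic_image}. The latter, applied via the injectivity and synchronization properties of $\pi$, yields $E^*(\pi(\uu_{2d+1}))=E^*(\uu_{2d+1})$; together with the trivial $E\geq E^*$ this reduces the goal to the upper bound $E(\pi(\uu_{2d+1}))\leq E^*(\uu_{2d+1})$. Theorem~\ref{thm:FormulaForE} expresses $E(\pi(\uu_{2d+1}))-1$ as $\sup_n |W_n|/|R_n|$ over bispecial factors $W_n$ of the image and their shortest return words $R_n$. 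For long bispecial factors, synchronization forces $W_n=\pi(w_n)$ up to bounded boundary corrections, and the ratio then matches the corresponding ratio for $\uu_{2d+1}$ up to lower-order terms, whose limit coincides with $E^*(\uu_{2d+1})-1$. Short bispecial factors are enumerated and treated directly; the role of the $2^d$ auxiliary letters is that the recoloring breaks what would otherwise be short high-exponent repetitions, bringing every such ratio below $E^*(\uu_{2d+1})-1$.

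The main obstacle is exactly this short-factor analysis, done uniformly in $d$: one must show that the contextual information encoded by $\pi$ suffices to kill every repetition of exponent exceeding $E^*(\uu_{2d+1})$ that could arise from a short bispecial factor, without disrupting the palindromic extensions required for richness. This reflects a real tension between enlarging the alphabet (which facilitates richness by providing more palindromic extension slots) and controlling the critical exponent (which favors fewer repeated patterns). Resolving it demands that the combinatorial design of $\pi$ in Section~\ref{sec:pi} be precisely calibrated against the explicit bispecial census of Sections~\ref{sec:palindrome}--\ref{sec:nonpalindrome}, and I expect the bulk of the technical work to consist of this case-by-case calibration.
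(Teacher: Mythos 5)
Your proposal does not prove the statement it was asked to prove. Theorem~\ref{thm:E*_morphic_image} is a general transfer result: for \emph{any} sequence ${\bf v}$ with uniform letter frequencies and \emph{any} injective morphism $\psi$ all of whose sufficiently long image factors have synchronization points, one has $E^*({\bf v})=E^*(\psi({\bf v}))$. What you wrote is instead a plan for proving Theorem~\ref{thm:hlavni}: designing the weighted morphism $\pi$, checking richness of $\pi(\uu_{2d+1})$ via Theorem~\ref{thm:richness_eqv}, and bounding the ratios of bispecial factors to their return words via Theorem~\ref{thm:FormulaForE}. None of that machinery (the bispecial census of $\uu_{2d+1}$, richness, Parikh-vector recurrences) is available or relevant for the general statement, whose hypotheses mention no particular sequence. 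Worse, your third paragraph explicitly \emph{applies} Theorem~\ref{thm:E*_morphic_image} to obtain $E^*(\pi(\uu_{2d+1}))=E^*(\uu_{2d+1})$; read as a proof of that very theorem, your argument is circular. (For the record, the paper does not prove this statement either: it imports it verbatim from \cite{Ochem}, Theorem~9, so there is no internal proof to match --- but your text is in any case a sketch of a different result.)

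A genuine proof would have two halves, neither of which appears in your proposal. For $E^*(\psi({\bf v}))\geq E^*({\bf v})$ one pushes repetitions forward: writing a factor $u^e=u^nu'$ of ${\bf v}$ with $u'$ a prefix of $u$, the word $\psi(u)^n\psi(u')$ is a factor of $\psi({\bf v})$ of exponent at least $n+|\psi(u')|/|\psi(u)|$, and the uniform letter frequencies give $|\psi(w)|\sim c\,|w|$ with $c=\sum_{i\in\mathcal A}f_i\,|\psi(i)|$, so the discrepancy between $|\psi(u')|/|\psi(u)|$ and $|u'|/|u|$ becomes negligible in the limit over factor lengths that defines $E^*$. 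For the reverse inequality one pulls repetitions back: a long factor $x$ of $\psi({\bf v})$ with period $q$ contains synchronization points (every window of length $L$ inside $x$ does, by hypothesis), periodicity of $x$ propagates the synchronization decomposition along $x$, and injectivity of $\psi$ then yields a factor of ${\bf v}$ whose period and length correspond to those of $x$ up to additive constants bounded in terms of $L$ and $\max_i|\psi(i)|$ --- constants that again vanish in the asymptotic exponent. Your proposal contains neither the forward push, nor the synchronization-based pull-back, nor any use of the frequency hypothesis, so the statement is left entirely unproved.
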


The focus of our attention in this article is the fixed point $\uu_d = \uu$ of the morphism $\varphi_{d} = \varphi$ defined in \eqref{eq:morphismFi}.  Even though we focus only on odd indices $d$ later, the properties derived in~\cite{DvKlPe2025} and listed below apply to all indices $d \geq 3$. 
\begin{enumerate}
\item The morphism $\varphi$ is primitive and injective.
\item If $w \in \A^*$ is a palindrome, then $\varphi(w)0$ is a palindrome. Hence, $\uu$ contains infinitely many palindromic factors.
\item The sequence $\uu$ is uniformly recurrent. 
\item The language of $\uu$ is closed under reversal.
\item The sequence $\uu$ is rich.
\item Every factor of $\uu$ has exactly $d$ return words.
\item The asymptotic critical exponent of $\uu$ satisfies 
\begin{equation}\label{eq:Lambda}
E^*(\uu) = 1+\frac{1}{3-\Lambda}\,, \ \text{where \ $\Lambda \in (2,3)$ \ is zero of the polynomial $t^{d-1}(t-2)^2-1$}.
\end{equation}
\end{enumerate}

\section{Palindromic bispecial factors in $\uu_{2d+1}$ and their return words}\label{sec:palindrome}
As follows from Theorem \ref{thm:FormulaForE}, the key objects necessary to determine the critical exponent of a sequence are bispecial factors and their return words.  An efficient method for describing bispecial factors of a~fixed point of a very general morphism was provided and developped in~\cite{Klouda2012, KlSt2015, GOKlSt2025}. This method was applied  in~\cite{DvKlPe2025}. Here we 
 summarize properties of bispecial factors of $\uu_{2d+1}$. We  draw from Proposition 15 and Proposition 19 in~\cite{DvKlPe2025} and use the notation
\begin{equation}\label{eq:viceVidlickoveBS} 
    F_0=\varepsilon \quad \text{and} \quad \ F_k=\varphi(F_{k-1})0\,
    \quad \text{for}\quad  k=1,2,\ldots, 2d\,.
\end{equation}
Note that every $F_k$ is a palindrome and 
$F_k = \varphi^{k-1}(0)\varphi^{k-2}(0)\cdots \varphi(0)0$.
In the sequel, $M$ denotes the incidence matrix of the morphism $\varphi$ defined in~\eqref{def:morphism_matrix}. We distinguish three types of palindromic bispecial factors:

\begin{description}

\item[Type 0] $F_k$ is a bispecial factor for every $k \in \{0,1,\ldots, 2d-1\}$. 

\item[Type I] An infinite series of palindromic bispecial factors $(w_n)_{n \in \mathbb N}$, where  $w_0 = (2d)$ has the shortest return word satisfying $\vec{r}_0 =\vec w_0=\vec{e}_{2d}$.

\item[Type II] An infinite series of palindromic bispecial factors $(w_n)_{n \in \mathbb N}$, where $w_0 = (2d)F_{2d}(2d)$ has the shortest return word satisfying $\vec{r}_0 =M^{2d}\vec{e}_0$. 
\end{description}

For the Parikh vectors of palindromic bispecial factors and their shortest return words in both infinite series $(w_n)$ holds 
\begin{equation}\label{eq:PalindromicBS} \vec{r}_{n} = M \vec{r}_{n-1} \qquad \text{and}\qquad   \vec{w}_{n} = M \vec{w}_{n-1}\ +  \ \left\{\begin{array}{ll}
\vec{e}_0+2\vec{e}_{2d}& \text{ if }\ n=0 \!\mod(2d); \\
\vec{e}_0 &\text{  otherwise}\,.
\end{array} \right.
\end{equation}
\begin{remark} Let us stress one important fact we deduced in \cite{DvKlPe2025}: if $w_n$ is a bispecial factor of Type I or II and $r_n$ the shortest return word to $w_n$, then each  return word $r$ to $w_n$ satisfies  $\vec{r}\geq\vec{r}_n$.  This property guarantees that $|\eta(r)|\geq |\eta(r_n)|$ for every morphism $\eta$.  
    
\end{remark}

The following lemmas help us find an explicit form of the Parikh vectors of palindromic bispecial factors.

\begin{lemma}\label{lem:pomocnaf}  Consider a sequence $(\vec{f}_n)_{n \in \N} $ of vectors from $\mathbb{R}^{2d+1}$ defined recursively:  
 $$\vec{f}_0 = \vec{0}\ \    \text{and} \ \ \vec{f}_n = 
M\,\vec{f}_{n-1} +\ \left\{\begin{array}{ll}
\vec{e}_0+2\vec{e}_{2d}& \text{ if }\ n=0 \!\mod(2d); \\
\vec{e}_0 &\text{  otherwise}\,.
\end{array} \right. $$
Then  \   
$
\vec{f}_n =A\,M^n \vec{e}_{2d} - B_i \vec{e}_0,$  \ where\\

 $i\in \{0,1,\ldots, 2d-1\}$ satisfies $i=n \mod (2d)$;

$A= (M-2I)(3I -M)^{-1}$ \  and 

 $B_i = (3I-M)^{-1} + 2 (M-2I)(M-I)^{-1}(M^i-I)(3I-M)^{-1}$. 
   
\end{lemma}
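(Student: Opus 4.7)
The plan is to verify the proposed closed form by induction on~$n$, reducing each inductive step to an algebraic identity involving~$M$. Two ingredients are used throughout: that any two polynomials in~$M$ commute, and the key relation $(M-2I)\vec{e}_{2d}=\vec{e}_0$, which is the Parikh-vector translation of $\varphi(2d)=0(2d)(2d)$.

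For the base case $n=0$ I would note that $B_0=(3I-M)^{-1}$ and compute $A\vec{e}_{2d}=(M-2I)(3I-M)^{-1}\vec{e}_{2d}=(3I-M)^{-1}(M-2I)\vec{e}_{2d}=(3I-M)^{-1}\vec{e}_0$, so the formula correctly returns $\vec{f}_0=\vec{0}$. The inductive step relies on the clean matrix identity $B_i-MB_{i-1}=-I$, valid for every $i\ge 1$. Subtracting the two defining expressions and using $(M^i-I)-M(M^{i-1}-I)=M-I$ cancels the factor $(M-I)^{-1}$, and what remains simplifies to $[(I-M)+2(M-2I)](3I-M)^{-1}=(M-3I)(3I-M)^{-1}=-I$. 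Applied to $\vec{e}_0$, this identity immediately handles the inductive step whenever $n\not\equiv 0\pmod{2d}$.

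The main obstacle is the case $n\equiv 0\pmod{2d}$, $n\ge 2d$, where the recurrence contains the extra term $2\vec{e}_{2d}$. Reading the same identity at $i=2d$ (with $B_{2d}$ interpreted as the formal extension of the defining formula) reduces this case to the single claim
\[
(B_{2d}-B_0)\vec{e}_0 = 2\vec{e}_{2d}.
\]
I would then expand $B_{2d}-B_0=2(M-2I)(M-I)^{-1}(M^{2d}-I)(3I-M)^{-1}$, substitute $\vec{e}_0=(M-2I)\vec{e}_{2d}$, and multiply both sides by $(M-I)(3I-M)$ to clear the inverses. After using the elementary computations $(M-I)\vec{e}_{2d}=\vec{e}_0+\vec{e}_{2d}$ and $(M-I)\vec{e}_0=\vec{e}_1$, the whole claim collapses to the single identity
\[
M^{2d}(\vec{e}_1-\vec{e}_0) = \vec{e}_{2d}.
\]

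This last identity is precisely where the combinatorial structure of the morphism~$\varphi$ must enter. From $M\vec{e}_j=\vec{e}_0+\vec{e}_{j+1}$ for $j<2d$ and $M\vec{e}_{2d}=\vec{e}_0+2\vec{e}_{2d}$ one reads off that $M(\vec{e}_{j+1}-\vec{e}_j)=\vec{e}_{j+2}-\vec{e}_{j+1}$ for $j=0,\ldots,2d-2$ and $M(\vec{e}_{2d}-\vec{e}_{2d-1})=\vec{e}_{2d}$. Iterating this telescoping chain $2d$ times starting from $\vec{e}_1-\vec{e}_0$ yields $\vec{e}_{2d}$, which closes the induction and establishes the closed form for $\vec{f}_n$.
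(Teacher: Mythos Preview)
Your proof is correct and takes a genuinely different route from the paper's. The paper unrolls the recurrence directly into
\[
\vec f_n=\sum_{j=0}^{n-1}M^j\vec e_0+2M^i\sum_{j=0}^{N-1}M^{2dj}\vec e_{2d}\qquad(n=2dN+i),
\]
sums the two geometric progressions, and then brings everything to the stated shape using the Cayley--Hamilton identity $M^{2d}(M-2I)^2=I$ together with $\vec e_0=(M-2I)\vec e_{2d}$. Your argument instead verifies the closed form by induction, reducing the generic step to the clean matrix identity $B_i-MB_{i-1}=-I$ and the wrap-around step to $(B_{2d}-B_0)\vec e_0=2\vec e_{2d}$, which you further reduce to $M^{2d}(\vec e_1-\vec e_0)=\vec e_{2d}$ and prove by the elementary telescoping $M(\vec e_{j+1}-\vec e_j)=\vec e_{j+2}-\vec e_{j+1}$. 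This is precisely the Cayley--Hamilton relation applied to the single vector $\vec e_{2d}$ (since $\vec e_1-\vec e_0=(M-2I)^2\vec e_{2d}$), so you recover what the paper uses, but by a self-contained combinatorial computation that never mentions the characteristic polynomial. The paper's approach has the advantage of explaining where the formulas for $A$ and $B_i$ come from; yours has the advantage of being entirely elementary and of isolating the single algebraic fact ($B_i-MB_{i-1}=-I$) that drives the induction.
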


\begin{proof} 
Let $n \in \N$ be of the form $n =2dN + i$, where $i\in\{0,1,\ldots, 2d-1\}$  and $N \in \N$.
The recurrence relation implies 
$$ 
\vec{f}_n= \sum_{j=0}^{n-1}M^j\vec{e}_0  +  2M^i\sum_{j=0}^{N-1}M^{2dj}\vec{e}_{2d}. $$
Our goal is to express $\vec{f}_n $ in the form  $A\,M^n\,\vec{e}_{2d} - B_i\vec{e}_0,$ 
where the matrix $B_i$ depends only on $i$, not on $N$,  and the matrix $A$ depends neither on $N$
nor on $i$. To reach this goal, we make use of the following equalities:
\begin{itemize}
   \item   $\vec{e}_0 = (M-2I)\vec{e}_{2d}$;
    \item $\sum_{j=0}^{n-1} M^j = (M-I)^{-1}(M^n - I) $;
    \item $\sum_{j=0}^{N-1} M^{2dj+i} = (M^{2d}-I)^{-1}(M^{n} -M^i) $.
 \end{itemize}
Consequently, it suffices to put $$A = (M-I)^{-1}(M-2I) + 2(M^{2d}-I)^{-1}\quad \text{and} \quad  B_i=(M-I)^{-1} + 2(M^{2d}-I)^{-1}(M-2I)^{-1}M^i\,.$$
 By Hamilton-Cayley theorem  $M^{2d}(M-2I)^2 = I$, hence
    
    \medskip
    \centerline{$(M^{2d} -I)(M-2I)^2 = I-(M-2I)^2 = (M-I)(3I-M)$\,.}
\noindent Using the above relation, we get $A$ and $B_i$ in the declared form.~\footnote{We write $B_i$ in the two-part form $(3I-M)^{-1} + 2 (M-2I)(M-I)^{-1}(M^i-I)(3I-M)^{-1}$ because it is then obvious for $i=0$ that the second part disappears and $B_0=(3I-M)^{-1}$.}

\end{proof}

\begin{lemma}\label{lem:explicitBS} Let $(\vec{f}_n)_{n\in \mathbb N}$ be a sequence of vectors defined in Lemma \ref{lem:pomocnaf}. Then 
\begin{enumerate}

   \item $\vec{f}_k$ is the Parikh vector of the bispecial factor $F_k$ for $k=0,1,\ldots, 2d-1$. Moreover, $\vec{f}_k\leq \vec{r}$ for every return word $r$ to the factor $F_k$.    \item 
        $\vec{f}_n + M^{n}\vec{e}_{2d}$ is the Parikh vector of the bispecial factor $w_n$ of Type I for every $n\in \N$. The Parikh vector of the  shortest return word  $r_n$  to $w_n$ 
is $M^n\vec{e}_{2d}$. 
    \item $\vec{f}_{n+2d}$ is the Parikh vector of the bispecial factor $w_n$ of Type II for every $n\in \N$. The Parikh vector of the shortest return word $r_n$ to $w_n$ is $M^{n+2d}\vec{e}_{0}$.

\end{enumerate}
    
\end{lemma}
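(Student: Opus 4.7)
The plan is to derive all three claims from \eqref{eq:viceVidlickoveBS}, \eqref{eq:PalindromicBS}, and the defining recurrence of $\vec f_n$ in Lemma~\ref{lem:pomocnaf}, by straightforward induction; everything reduces to matching two linear recurrences that happen to carry the same correction term.

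For Part~1, the definition $F_k=\varphi(F_{k-1})0$ translates to $\vec F_k = M\vec F_{k-1}+\vec e_0$ with $\vec F_0=\vec 0$. In the range $0\le k\le 2d-1$ the condition $k\equiv 0\pmod{2d}$ holds only at $k=0$, which is the initialization, so the recurrence of $\vec f_n$ collapses to the same update $+\vec e_0$ and an induction yields $\vec f_k=\vec F_k$. The bound $\vec f_k\le\vec r$ is the only genuinely combinatorial claim: I would obtain it by exploiting that every complete return word to the palindromic prefix $F_k$ is itself a palindrome (Theorem~\ref{thm:richness_eqv}), and that the $d$ return words to $F_k$ admit an explicit description via the bispecial-extension method of \cite{Klouda2012, KlSt2015} as adapted in \cite{DvKlPe2025}. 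This description forces every return word $r$ to contain $F_k$ in its interior (up to overlap at the endpoints of $rF_k$), from which $\vec r\ge \vec F_k=\vec f_k$ follows.

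For Parts~2 and 3, the Parikh vector of the shortest return word is obtained by iterating $\vec r_n=M\vec r_{n-1}$ from the stated base cases, yielding $\vec r_n=M^n\vec e_{2d}$ (Type~I) and $\vec r_n=M^{n+2d}\vec e_0$ (Type~II). The Parikh vectors of the bispecial factors themselves proceed by induction on $n$. For Type~I the base case reads $\vec w_0=\vec e_{2d}=\vec 0+M^0\vec e_{2d}=\vec f_0+M^0\vec e_{2d}$, and for Type~II it reads $\vec w_0=\vec F_{2d}+2\vec e_{2d}$, which equals $\vec f_{2d}$ because Part~1 and the $F_k$-recurrence give
\[
\vec f_{2d}=M\vec f_{2d-1}+\vec e_0+2\vec e_{2d}=M\vec F_{2d-1}+\vec e_0+2\vec e_{2d}=\vec F_{2d}+2\vec e_{2d}.
\]
Writing $\vec c_n$ for the correction term in \eqref{eq:PalindromicBS}, for Type~I
\[
\vec w_n = M\vec w_{n-1}+\vec c_n = M(\vec f_{n-1}+M^{n-1}\vec e_{2d})+\vec c_n = \vec f_n + M^n\vec e_{2d};
\]
and for Type~II, observing that $2d\equiv 0\pmod{2d}$, so the correction triggered at index $n$ in \eqref{eq:PalindromicBS} matches the correction triggered at index $n+2d$ in the recurrence of $\vec f$,
\[
\vec w_n = M\vec w_{n-1}+\vec c_n = M\vec f_{n+2d-1}+\vec c_n = \vec f_{n+2d}.
\]

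The routine core of the proof is the bookkeeping of these two recurrences, which agree by construction. The genuinely delicate step is the \emph{moreover} bound in Part~1: it is not a statement about linear algebra but about the combinatorial structure of return words to the prefixes $F_k$ of $\uu_{2d+1}$, and I expect it to require an explicit description of all $d$ return words to each $F_k$ obtained through the Klouda–Starosta framework used in \cite{DvKlPe2025}. Once that description is in hand, the vectorial inequality $\vec r\ge \vec F_k$ is immediate.
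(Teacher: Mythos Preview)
Your treatment of Parts 2 and 3, and of the identity $\vec f_k=\vec F_k$ in Part 1, is correct and essentially the same as the paper's (the paper packages the induction as the closed form $\vec w_n=M^n\vec w_0+\vec f_n$, then for Type~II invokes the identity $M^n\vec f_{2d}+\vec f_n=\vec f_{n+2d}$, but this is equivalent to your step-by-step argument).

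The only real gap is the ``moreover'' inequality $\vec f_k\le\vec r$, which you leave as a plan and whose difficulty you overestimate. No appeal to the general bispecial-extension framework is needed. The paper's argument is short and concrete: since every $\varphi$-image begins with $0$ and contains no other $0$, the return words to $F_1=0$ are exactly the words $\varphi(j)$ for $j\in\{0,\ldots,2d\}$; iterating, the return words to $F_k$ are exactly the $\varphi^k(j)$. For $0\le j\le 2d-k$ one checks by induction on $k$ that the Parikh vector of $\varphi^k(j)$ is $M^k\vec e_j=\vec f_k+\vec e_{j+k}\ge\vec f_k$, while for $j>2d-k$ the Parikh vector of $\varphi^k(j)$ dominates that of some $\varphi^k(j')$ with $j'\le 2d-k$, so those cases need not be checked separately. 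Two minor slips to note: the number of return words to each factor of $\uu_{2d+1}$ is $2d+1$, not $d$; and your heuristic ``$r$ contains $F_k$ in its interior'' does not by itself yield $\vec r\ge\vec F_k$ without first establishing $|r|\ge|F_k|$, which is essentially the point at issue.
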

\begin{proof} 
The Parikh vectors of the bispecial factors $F_k$ defined by \eqref{eq:viceVidlickoveBS} satisfy the recurrence relation  $\vec{F}_0 =\vec{0}$ and $\vec{F}_{k} = M\vec{F}_{k-1} +\vec{e}_0$ with $\vec{F}_0 = \vec{f_0}$.   Hence  $\vec{F}_k = \vec{f_k}$ for $k = 0,1, \ldots, 2d-1$.   

As $F_0 = \varepsilon$, every letter $j \in\{0,1,\ldots, 2d\}$ is a return word to $F_0$ and has the shortest length, namely 1. Obviously, $\vec{f}_0 = \vec{0}\leq  \vec{e_j}$. 

$F_1 = 0$
 and $\varphi(j)$ for $j = 0,\ldots, 2d $
 are return words to $F_1$.  As $\varphi(2d-1) = 0(2d)$ is a proper prefix of $\varphi(2d)=0(2d)(2d)$, the return word $\varphi(2d)$ can be excluded from our further considerations.  The Parikh vector $M\vec{e_j}$ of the remaining return words $\varphi(j)$ satisfies $M\vec{e_j} = \vec{f_1}+ \vec{e}_{j+1}\geq \vec{f}_1$.  

It follows by induction that the shortest return words to $F_k$ are $\varphi^k(j)$ for $j=0, 1, \ldots, 2d-k$ and the Parikh vector of $\varphi^k(j)$ is $\vec{f}_k +\vec{e}_{j+k}\geq \vec{f}_k$.

 \medskip
 
The form of recurrence relation~\eqref{eq:PalindromicBS} implies for both types of palindromic bispecial factors 
 $$\vec{w}_n = M^n\vec{w}_0 + \vec{f}_n\quad \text{and}\quad \vec{r}_n=M^n\vec{r}_0\,.$$
 For Type I, the initial vectors are $\vec{w}_0 = \vec{r}_0=\vec{e}_{2d}$, for Type II, the initial vectors are $\vec{w}_0 = 2\vec{e}_{2d} + \vec{F}_{2d} =  2\vec{e}_{2d} + M\vec{F}_{2d-1} + \vec{e}_0  =   2\vec{e}_{2d} + M\vec{f}_{2d-1}+ \vec{e}_0  =  \vec{f}_{2d}$ and $\vec{r}_0 =M^{2d}\vec{e}_0$. 
 Using Lemma~\ref{lem:pomocnaf}, we get that $M^n\vec{f}_{2d}+\vec{f}_n=\vec{f}_{n+2d}$.
    
\end{proof}

\section{Non-palindromic bispecial factors in $\uu_{2d+1}$  and their return words}\label{sec:nonpalindrome}

By Proposition~19 in~\cite{DvKlPe2025}, there is an infinite series of non-palindromic bispecial factors  $(w_n)_{n \in \mathbb N}$ associated with each $k \in \{1,2,\ldots, 2d-1\}$, where the initial bispecial factor equals $w_0=F_k(2d)$ and the Parikh vector of the shortest return word $r_0$ to $w_0$ satisfies
$\vec{r_0}\geq M^k\,\vec{e}_{2d}$ \  or \    $\vec{r_0}\geq M^{2d}\,\vec{e}_{0}$. As the language of $\uu_{2d+1}$ is closed under reversal, the mirror image of every $w_n$ is a non-palindromic bispecial factor, too. No other non-palindromic bispecial factors occur in $\uu_{2d+1}$.   

For the Parikh vectors of $w_n$ and their shortest return words $r_n$ in all non-palindromic series, the following recurrence relation holds: 
\begin{equation}\label{eq:non-PalindromicBS} \vec{r}_{n} = M \vec{r}_{n-1} \quad \text{and}\quad   \vec{w}_{n} = M \vec{w}_{n-1}\ +  \ \left\{\begin{array}{ll}
\vec{e}_0+\vec{e}_{2d}& \text{ if }\ n =-k  \text{ or } 0 \!\mod(2d)\,;\\
\vec{e}_0 &\text{  otherwise}\,.
\end{array} \right.
\end{equation}

\begin{lemma}\label{lem:NezahrajiSi} Let $w$ be a non-palindromic bispecial factor and $r$ a return word to $w$ in $\uu_{2d+1}$, then 
$$\vec{r}\geq \vec{w}+\vec{e}_{2d}\qquad \text{or} \qquad \vec{r}\geq \vec{w}+\vec{e}_{2d-1} + \vec{e}_{2d-2}+\cdots +\vec{e}_1+\vec{e}_0.
$$
\end{lemma}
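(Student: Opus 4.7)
The plan is to argue by induction on the index $n$ within each of the non-palindromic series described in Section~\ref{sec:nonpalindrome}. Since the language of $\uu_{2d+1}$ is closed under reversal, it suffices to handle $w=w_n$ itself for some $k\in\{1,2,\dots,2d-1\}$ and $n\in\N$, where the Parikh vectors $\vec{w}_n$ and $\vec{r}_n$ satisfy the recurrence \eqref{eq:non-PalindromicBS}.

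For the base case $n=0$, the initial bispecial is $w_0 = F_k(2d)$, and Proposition~19 of~\cite{DvKlPe2025} supplies one of the two alternatives $\vec{r}_0 \geq M^k\vec{e}_{2d}$ or $\vec{r}_0 \geq M^{2d}\vec{e}_0$ for the shortest return word $r_0$. Using the explicit expression $\vec{w}_0 = \vec{f}_k+\vec{e}_{2d}$ coming from Lemma~\ref{lem:explicitBS}, I would verify directly that $M^k\vec{e}_{2d} \geq \vec{w}_0 + \vec{e}_{2d}$ in the first scenario and $M^{2d}\vec{e}_0 \geq \vec{w}_0 + \sum_{i=0}^{2d-1}\vec{e}_i$ in the second; both inequalities are component-wise checks, since $\vec{f}_k = (M^{k-1}+\cdots+I)\vec e_0$ is known in closed form. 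For any non-shortest return word $r$ to $w_0$, one must further argue $\vec{r}\geq \vec{r}_0$: this uses the fact that in $\uu_{2d+1}$ the letter $(2d)$ can be followed only by $0$ or $(2d)$, which severely restricts the right extensions of $w_0$, and hence restricts the form of a complete return word to the point where the Parikh-dominance over $\vec{r}_0$ can be verified case by case.

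For the inductive step, we invoke the structural fact (implicit in the derivation of~\eqref{eq:non-PalindromicBS}) that every return word $r$ to $w_n$ arises as $\varphi(r')$ for some return word $r'$ to $w_{n-1}$, so $\vec{r}=M\vec{r}'$. Suppose the first alternative $\vec{r}'\geq \vec{w}_{n-1}+\vec{e}_{2d}$ holds at level $n-1$. From $M\vec{e}_{2d}=\vec{e}_0+2\vec{e}_{2d}$ together with $\vec{w}_n=M\vec{w}_{n-1}+\vec{e}_0+\epsilon\vec{e}_{2d}$, $\epsilon\in\{0,1\}$, one obtains $\vec{r}-\vec{w}_n-\vec{e}_{2d}\geq (1-\epsilon)\vec{e}_{2d}\geq \vec{0}$, so the first alternative persists. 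If instead $\vec{r}'\geq \vec{w}_{n-1}+\sum_{i=0}^{2d-1}\vec{e}_i$ holds, then applying $M$ and using $M\bigl(\sum_{i=0}^{2d-1}\vec{e}_i\bigr)=2d\,\vec{e}_0+\sum_{i=1}^{2d}\vec{e}_i$ yields $\vec{r}-\vec{w}_n-\sum_{i=0}^{2d-1}\vec{e}_i\geq (2d-2)\vec{e}_0+(1-\epsilon)\vec{e}_{2d}\geq \vec{0}$, preserving the second alternative.

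The main obstacle is the base case, specifically ensuring that \emph{every} return word (not only the shortest) to $w_0$ satisfies the stated Parikh-vector lower bound. This requires a careful morphism-level analysis of the limited bispecial extensions of $F_k(2d)$ and of the positions in $\uu_{2d+1}$ where $w_0$ can reoccur. Once this base case is settled and the lifting fact that return words at level $n$ are $\varphi$-images of return words at level $n-1$ is granted, the inductive propagation reduces to the elementary vector inequalities above.
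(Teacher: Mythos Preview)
Your approach is essentially the same as the paper's: induction on $n$ along each non-palindromic series, with the base case fed by the two alternatives $\vec r\geq M^k\vec e_{2d}$ or $\vec r\geq M^{2d}\vec e_0$ from Proposition~19 of~\cite{DvKlPe2025}, and the inductive step carried out by exactly the vector computations you wrote. The only noteworthy difference is in how the base-case inequalities are verified: rather than a component-wise check using $\vec f_k$, the paper observes that $F_k(2d)$ is a proper prefix of $\varphi^k(2d)$ whose last letter is $(2d)$ (giving $\vec w_0+\vec e_{2d}\le M^k\vec e_{2d}$ immediately), and for the second alternative uses the factorization $\varphi^{2d}(0)=\varphi^{2d-1}(0)\cdots\varphi^{k}(0)\,F_k(2d)$ from~\cite{DvKlPe2025} to peel off $M^{2d-1}\vec e_0\geq\sum_{i=0}^{2d-1}\vec e_i$. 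One small correction to your plan: for a non-shortest return word $r$ to $w_0$ you do \emph{not} need the stronger claim $\vec r\geq\vec r_0$; what is actually used (and what Proposition~19 provides) is that \emph{each} return word to $w_0$ individually satisfies one of the two disjunctive lower bounds, and different return words may satisfy different alternatives.
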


\begin{proof} Let  $n\in \N$ and $k \in \{1,2,\ldots, 2d-1\}$ such that  $w=w_n$ and  $w_n$  belongs to a series of non-palindromic bispecial factors starting with $w_0 = F_k(2d)$. 

Case $n =0$.  By the above description of the shortest return word $r=r_0$ to the starting non-palindromic bispecial factor we know that $\vec{r}\geq M^k \vec{e}_{2d}$ or $\vec{r}\geq M^{2d} \vec{e}_{0}$.  

Note that $ w_0 = F_k(2d)$ is a proper prefix of $\varphi^k(2d)$ and the last letter of $\varphi^k(2d)$ is $(2d)$. Thus $\vec{w}+  \vec{e}_{2d}\leq M^k\,\vec{e}_{2d} = \text{the Parikh vector of } \varphi^k(2d) $.

By Lemma 18, Item 6 in \cite{DvKlPe2025},  $\varphi^{2d}(0) = F_{2d}(2d) = \varphi^{2d-1}(0)\varphi^{2d-2}(0) \cdots \varphi^k(0)F_{k}(2d)$. We deduce that $\text{the Parikh vector of } \varphi^{2d}(0)  \geq  M^{2d-1}\vec{e}_0+\vec w \geq \vec{e}_{2d-1} +\vec{e}_{2d-2}+\cdots+\vec{e}_1+\vec{e}_0 +  \vec{w} $, where the last inequality follows from the fact that $ \varphi^{2d-1}(0)$ contains all letters in $\{0,1,\ldots, 2d-1\}$.

Thus the statement is valid for $n =0$.\\[1mm] 

If for some index $n \in \N, n\geq 1 $,  the inequality $ \vec{r}_{n-1}\geq \vec{w}_{n-1} + \vec{e}_{2d}$ takes place, then   
$\vec{r}_{n}= M\vec{r}_{n-1}\geq M\vec{w}_{n-1} + M\vec{e}_{2d} = M\vec{w}_{n-1} + \vec{e}_0+ 2 \vec{e}_{2d} \geq \vec{w}_{n} + \vec{e}_{2d}$.   
\medskip

If for some index $n \in \N, n\geq 1 $,  the inequality
$\vec{r}_{n-1}\geq \vec{w}_{n-1} + \vec{e}_{2d-1} +\vec{e}_{2d-2}+\cdots +\vec{e}_1+\vec{e}_0$ holds, then 
$$\begin{array}{rcl} 
\vec{r}_n & \geq & M\vec{w}_{n-1} + M\vec{e}_{2d-1} +M\vec{e}_{2d-2} +\cdots +M\vec{e}_1+M\vec{e}_0\\
&=& M\vec{w}_{n-1} + 2d\vec{e}_0+\vec{e}_{2d} +\vec{e}_{2d-1}+\vec{e}_{2d-2}+\cdots+\vec{e}_1\\
&\geq & \vec{w}_{n} +\vec{e}_{2d-1}+\vec{e}_{2d-2}+\cdots+\vec{e}_1+\vec{e}_0\,.
\end{array}$$




\end{proof}

\section{Morphism $\pi$ and richness  of $\pi(\uu_{2d+1})$}
\label{sec:pi}
\begin{definition}\label{def:weight} Let $ \mathcal{A}=\{0,1,\ldots, 2d\}$. Put $$\vec{h}=(h_0,h_1, \ldots, h_{2d}) =(\underbrace{1,1, \ldots,1}_{d-\text{items}}, \,\underbrace{2^0,2^1, \ldots, 2^{d}}_{(d+1)-\text{items}} \,)\,.$$
A morphism $\pi: \mathcal{A}^* \mapsto\mathcal{B}^*$
satisfying for every $i\in \mathcal{A}$  
\begin{enumerate} \item $\pi(i)$ is a palindrome of length $h_i$; 
\item the number of distinct letters occurring in $\pi(i)$ is maximum; 
\item any letter which occurs in  $\pi(i)$  does not occur in $\pi(j)$ for $j\neq i$
\end{enumerate} 
is called a~{\em weighted morphism}. 
\end{definition}

Let us remark that the cardinality of the alphabet $\mathcal B$ equals $d+2^d$.
 \begin{example} Let $d=2$. Then $\vec{h}=(1,1,1,2,4)$ and 
 $\pi(0) = 0$, $\pi(1) = 1$, $\pi(2) = 2$, $\pi(3) = 33$, 
$\pi(4) = 4554$.      
 \end{example}

\begin{proposition}\label{prop:Projekce} Let $\pi$ be a weighted morphism over $ \mathcal{A}=\{0,1,\ldots, 2d\}$ and $\uu = \uu_{2d+1}$.    
\begin{itemize} 
\item A factor $p$ of $\uu$ is a~palindrome if and only if $\pi(p)$ is a~palindrome.  
\item  The sequence $ \pi(\uu)$ is rich.  
\item If $w'$ is a bispecial factor of $ \pi(\uu)$ of length $>1$ and $r'$ a return word to $w'$, then 
$w' = \pi(w)$  and $r' = \pi(r)$, where  $w$ is a bispecial factor of $\uu$ and $r$ is a~return word to $w$ in $\uu$.

 \end{itemize}  
 \end{proposition}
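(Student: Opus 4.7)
The plan is to prove the three statements in order. For (1), the key remarks are that $\pi$ is injective by condition~3 of Definition~\ref{def:weight} (disjoint alphabets), and that $\pi$ intertwines reversal, $\pi(\overline p) = \overline{\pi(p)}$ for every $p \in \mathcal A^*$, because each block $\pi(i)$ is itself a palindrome. The biconditional then follows at once: $p = \overline p$ iff $\pi(p) = \overline{\pi(p)}$.

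The heart of the proposition is (3). I would introduce the source map $\sigma\colon \mathcal B \to \mathcal A$ sending $a \in \mathcal B$ to the unique $i$ with $a$ appearing in $\pi(i)$; condition~3 makes this well-defined. Block boundaries in $\pi(\uu)$ are exactly the positions where $\sigma$ changes between consecutive letters, so the block structure of any factor that witnesses a $\sigma$-transition is rigidly determined. Given a bispecial factor $w'$ of length~$>1$, I split on whether $w'$ crosses a block boundary. If it does, then either the first (or last) block intersected by $w'$ is filled entirely, or the letter immediately preceding (resp. following) $w'$ is forced by the partial-block content, contradicting left (resp. right) specialness. Hence $w' = \pi(i_0)\pi(i_1)\cdots\pi(i_k) = \pi(w)$ for some $w \in \mathcal L(\uu)$. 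If $w'$ lies inside a single $\pi(i)$, I would use that $\pi(i)$ is a palindrome with the maximum number of distinct letters, so each letter of $\pi(i)$ occurs exactly twice, at mirror positions. A short combinatorial lemma then shows that any factor of $\pi(i)$ of length~$\geq 2$ has a unique occurrence in $\pi(i)$, and the same forced-context argument as before now forces $w' = \pi(i)$. The bispecial-to-bispecial and return-word correspondences are immediate from the rigidity: distinct source letters have disjoint-alphabet images, so different left (resp. right) extensions of $w$ in $\uu$ correspond bijectively to different left (resp. right) extensions of $\pi(w)$ in $\pi(\uu)$, and consecutive occurrences of $\pi(w)$ match consecutive occurrences of $w$, giving $r' = \pi(r)$.

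For (2) I would invoke Theorem~\ref{thm:richness_eqv}(2): it suffices to show that every complete return word to a palindromic factor of $\pi(\uu)$ is a palindrome. Using the palindromic source sequence of such a factor $p'$ together with the rigid block structure, I classify $p'$ into three types: (A) $p' = \pi(p)$ for a palindromic $p \in \mathcal L(\uu)$; (B) a \emph{bridge} palindrome $p' = s\,\pi(i_1 \cdots i_{k-1})\,\overline s$ with $s$ a proper suffix of $\pi(i_0) = \pi(i_k)$ and $i_0 i_1 \cdots i_k$ palindromic in $\mathcal L(\uu)$; or (C) a proper palindromic factor of a single $\pi(i)$, which by the centering analysis must share its centre with $\pi(i)$. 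In case~(A), the correspondence in (3) read for arbitrary (not necessarily bispecial) factors reduces complete return words in $\pi(\uu)$ to $\pi$-images of complete return words in $\uu$, which are palindromes by richness of $\uu$ and (1). In cases~(B) and~(C), a direct computation yields $c' = q\,\pi(r')\,\overline q$, where $q$ is a suffix of $\pi(i_0)$ and $r'$ is the interior of a complete return word in $\uu$; by richness of $\uu$, $r'$ is a palindrome, $\pi(r')$ is a palindrome by (1), and $c'$ is then forced to be a palindrome by the symmetric $q \cdots \overline q$ framing.

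The main obstacle I expect is the single-block part of (3) and case~(C) of (2). Both rely on the same structural facts about $\pi(i)$ for $i\in\{d+1,\ldots,2d\}$: every factor of length~$\geq 2$ of $\pi(i)$ has a unique occurrence in $\pi(i)$, and every palindromic factor of length~$\geq 2$ in $\pi(i)$ is centred in $\pi(i)$. Both follow from the ``each letter appears exactly twice at mirror positions'' structure imposed by the maximum-distinct-letters condition in Definition~\ref{def:weight}; proving them is short but requires careful case analysis. The remainder of the argument is bookkeeping with the block structure established through $\sigma$.
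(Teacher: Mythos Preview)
Your approach matches the paper's: item~(1) is immediate from the palindromic-block structure and injectivity; item~(2) is proved via complete return words to palindromes (Theorem~\ref{thm:richness_eqv}, Item~2) with the same three-way case split (factor inside a single $\pi(a)$ versus $x\,\pi(q)\,\overline{x}$ with $q$ nonempty or empty); and item~(3) uses the rigid block decomposition coming from the disjoint alphabets. Your write-up is in fact more detailed than the paper's, which for item~(3) simply asserts that ``the form of $\pi$ implies $w' = \pi(w)$'' without spelling out the source map or the unique-occurrence lemma.

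One small oversight: your claim that block boundaries are \emph{exactly} the positions where $\sigma$ changes is not literally true, because $(2d)(2d)$ is a factor of $\uu_{2d+1}$ (it occurs inside $\varphi(2d) = 0(2d)(2d)$), so the internal boundary of $\pi(2d)\pi(2d)$ is invisible to $\sigma$. Consequently your dichotomy ``crosses a $\sigma$-boundary'' versus ``lies inside a single $\pi(i)$'' omits the case of a factor $w'$ with constant $\sigma$-value $2d$ that genuinely straddles this internal boundary. The fix is short: writing $\pi(2d) = a_1a_2\cdots a_m a_m\cdots a_2a_1$ with $a_1,\ldots,a_m$ distinct, any such $w'$ contains the two-letter word $a_1a_1$, which has a unique occurrence in $\pi(2d)^2$ and hence pins down the position of $w'$; since $(2d)(2d)$ has unique left and right extensions in $\uu$ (both equal to $0$), the extensions of $w'$ in $\pi(\uu)$ are forced and $w'$ is not bispecial. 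With this patch your argument goes through; the paper does not spell this case out either.
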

\begin{proof}
\begin{itemize}
\item The first item follows immediately from the form of $\pi$.
\item By Theorem~\ref{thm:richness_eqv}, Item 2, it suffices to check that any complete return word to a~non-empty palindrome in $\pi(\uu)$ is a palindrome.  
\begin{itemize}
\item Firstly, consider $p$ a~letter. Then $p$ is contained in the image $\pi(a)$ of a unique letter $a\in\mathcal A$. Then a~complete return word $z$ to $p$ may take on one of the possible forms: either $z$ is entirely contained in $\pi(a)$ and $z$ is a~palindrome by the definition of $\pi$, or $z$ is contained in the factor $\pi(aya)$, where $aya$ is a complete return word to $a$, consequently $aya$ is a palindrome by richness of $\uu$. Then $z$ is a~palindrome, too -- it is a~central factor of $\pi(aya)$, which is a~palindrome by the first item. 
\item Secondly, consider a~palindrome $p$ of length $>1$ in $\pi(\uu)$, which is contained in $\pi(a)$ for some letter $a \in \mathcal A$. Then by the form of $\pi$, the letter $a$ is uniquely given and $p$ occurs once in $\pi(a)$ -- as a central factor. Then any complete return word $z$ to $p$ is contained in the factor $\pi(aya)$, where $aya$ is a complete return word to $a$ in $\uu$, hence $aya$ is a palindrome by richness of $\uu$. Consequently, $z$ is a palindrome, too.    
\item Thirdly, assume $p$ is a palindrome in $\pi(\uu)$, which is not contained in the letter images by $\pi$. By the definition of $\pi$, it can be uniquely written as $p=x\pi(q)\overline{x}$, where $x$ is a proper suffix of $\pi(a)$ for a letter $a\in \mathcal A$ and $q$ is a factor of $\uu$. The words $x$ and $q$ may be empty, but not at the same time. By the first item, $q$ is a~palindrome. 
If $z$ is a complete return word to $p$ in $\pi(\uu)$ and $q$ is non-empty, then again it can be uniquely written as $z=x\pi(y)\overline{x}$, where $y$ is a complete return word to $q$. Hence, $y$ is a palindrome due to richness of $\uu$. Consequently, $z$ is a palindrome, too, by the first item.
If $q$ is empty, then $p=x\overline{x}$ occurs as the central factor of $\pi(aa)$. By similar arguments as above, any complete return word $z$ to $p$ is a~central factor of $\pi(y)$, where $y$ is a complete return word to $aa$ in $\uu$, hence $y$ is a palindrome and so is $z.$
\end{itemize}
\item If $w'$ is a bispecial factor of $ \pi(\uu)$ of length $>1$, then the form of $\pi$ implies that $w'=\pi(w)$ and $w$ is a bispecial factor in $\uu$. If $r'$ is a return word to $w'$, then $r'w'$ starts in $w'$ and ends in $w'$, therefore $r'w'=\pi(rw)$, where $rw$ contains $w$ as prefix and suffix and nowhere else. Thus, $r$ is a return word to $w$ in $\uu$.
\end{itemize}
    
\end{proof}

\section{Proof of Theorem \ref{thm:hlavni}}\label{sec:proofThm} 
If $w'$ is a~bispecial factor and $r'$ is a~return word to $w'$ in $\pi(\uu_{2d+1})$, then by Proposition~\ref{prop:Projekce}, we obtain for length the following formula $|w'| = \vec{h}\vec{w}$ and $|r'| = \vec{h}\vec{r}$, where $w$ and $r$ are a~bispecial factor and its return word in $\uu_{2d+1}$. 
Given the relation \eqref{eq:Lambda} and Theorem~\ref{thm:FormulaForE},  our goal is to prove that for every bispecial factor $w$ of $\uu_{2d+1}$, where $d\geq 3$, and a return word $r$ to $w$, the following inequality holds:
\begin{equation}\label{eq:nerovnost}
\frac{\vec{h}\, \vec{w}}{ \vec{h}\, \vec{r}} \ \leq \   \frac{1}{3-\Lambda},  \qquad \text{equivalently,}\qquad \vec{h}\, \vec{w}   \ \leq \ \frac{1}{3-\Lambda}  \  \vec{h}\, \vec{r}\,.  
    \end{equation}
The proof of this inequality is divided into three lemmas depending on the type of bispecial factor.     
\begin{description}
\item[ Bispecial factors of Type 0 and non-palindromic bispecial factors\ ] 
\begin{lemma}\label{lem:NonPalNE}  Let $w$ be a bispecial factor in $\uu_{2d+1}$ and $r$ a return word to $w$. If $w$ equals $F_k$ for some $k = 0,1,\ldots, 2d-1$ or $w$ is non-palindromic,  then  the inequality  \eqref{eq:nerovnost} holds true.  
    \end{lemma}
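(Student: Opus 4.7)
The key observation is that this lemma is the easy part of the argument. Since $\Lambda\in(2,3)$ by~\eqref{eq:Lambda}, one has $\frac{1}{3-\Lambda}>1$, so it suffices to establish the weaker bound
\[
\vec{h}\,\vec{w}\ \leq \ \vec{h}\,\vec{r}.
\]
Because all entries of $\vec h$ are positive, this in turn follows from the componentwise inequality $\vec{w}\leq\vec{r}$. The whole proof therefore reduces to verifying this last inequality in the two cases.

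For a Type~0 bispecial factor $w=F_k$ with $k\in\{0,1,\ldots,2d-1\}$, Lemma~\ref{lem:explicitBS}(1) states precisely that $\vec w = \vec f_k \leq \vec r$ for every return word $r$ to $F_k$, so there is nothing to do. For a non-palindromic bispecial factor $w$, Lemma~\ref{lem:NezahrajiSi} provides the two alternatives
\[
\vec r \ \geq\ \vec w + \vec e_{2d}\qquad\text{or}\qquad \vec r \ \geq\ \vec w + \sum_{i=0}^{2d-1}\vec e_i,
\]
each of which is strictly stronger than $\vec r \geq \vec w$. Pairing with $\vec h$ then gives $\vec h\,\vec r \geq \vec h\,\vec w + 2^d$ or $\vec h\,\vec r \geq \vec h\,\vec w + d + 2^d - 1$, respectively, and in particular $\vec h\,\vec w \leq \vec h\,\vec r$.

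I do not anticipate any real obstacle in this lemma. The delicate quantitative estimate involving $\Lambda$ -- where the right-hand side $\frac{1}{3-\Lambda}\vec h\,\vec r$ genuinely has to do more work than $\vec h\,\vec r$ -- is needed only for the infinite Type~I and Type~II palindromic series, for which the ratio $|w_n|/|r_n|$ approaches $\frac{1}{3-\Lambda}$ asymptotically and the spectral behaviour of $M$ becomes essential. In the present cases the ratio is always bounded by~$1$, and the conclusion follows from the already established componentwise Parikh-vector inequalities.
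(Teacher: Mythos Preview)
Your proposal is correct and follows essentially the same approach as the paper's own proof: both reduce to the componentwise inequality $\vec w\leq\vec r$ via Lemma~\ref{lem:explicitBS}(1) for the factors $F_k$ and Lemma~\ref{lem:NezahrajiSi} for the non-palindromic factors, then invoke $1<\frac{1}{3-\Lambda}$. Your version just spells out the intermediate steps and the surrounding context in more detail.
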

\begin{proof}  Due to Lemma \ref{lem:NezahrajiSi} and Lemma \ref{lem:explicitBS}, Item 1, 
   $\vec{w}\leq \vec{r}$. Hence $\vec{h}\,\vec{w} \leq \vec{h}\, \vec{r}$. Since $1<\frac{1}{3-\Lambda}$, the inequality \eqref{eq:nerovnost} holds. 
\end{proof}
\item[ Bispecial factors of Type I\ ]  
\begin{lemma}\label{lem:Dukaz nerovnostiI} 
The inequality~\eqref{eq:nerovnost} holds for all bispecial factors of Type I and their return words. 
\end{lemma}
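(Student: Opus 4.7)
By Lemma \ref{lem:explicitBS}, Item 2, we have $\vec{w}_n = \vec{f}_n + \vec{r}_n$ and $\vec{r}_n = M^n\vec{e}_{2d}$, so \eqref{eq:nerovnost} is equivalent to $(3-\Lambda)\vec{h}\cdot\vec{f}_n \le (\Lambda-2)\vec{h}\cdot\vec{r}_n$. Substituting the closed form $\vec{f}_n = AM^n\vec{e}_{2d} - B_i\vec{e}_0$ from Lemma \ref{lem:pomocnaf} (where $A = (M-2I)(3I-M)^{-1}$ and $i = n\bmod 2d$) and using $A+I = (3I-M)^{-1}$, the inequality becomes
$$U_n \le (3-\Lambda)\vec{h}^T B_i\vec{e}_0, \qquad U_n := (3-\Lambda)\vec{h}^T(3I-M)^{-1}\vec{r}_n - \vec{h}^T\vec{r}_n.$$

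The main technical tool is the scalar recurrence
$$U_{n+1} = 3U_n - \vec{\delta}^T\vec{r}_n, \qquad \vec{\delta} := M^T\vec{h} - \Lambda\vec{h},$$
which follows by a short calculation from the commutation identity $M(3I-M)^{-1} = 3(3I-M)^{-1} - I$ and from $\vec{h}^T M = \Lambda\vec{h}^T + \vec{\delta}^T$. The entries of $\vec{\delta}$ are computed directly from the definitions of $\vec{h}$ and $M$: $\delta_j = 2-\Lambda$ for $0\le j \le d-1$, and $\delta_j = 1 - 2^{j-d}(\Lambda-2)$ for $d \le j \le 2d$. The defining identity $\Lambda^d(\Lambda-2) = 1$---equivalent to \eqref{eq:Lambda} thanks to $\Lambda>2$---guarantees that $\delta_j > 0$ in the second range, bounding $\vec{\delta}$ very tightly in terms of the arithmetic of $\Lambda$.

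With these tools in place, the bound $U_n \le (3-\Lambda)\vec{h}^T B_i\vec{e}_0$ is proved by induction on $n$. The base case $n=0$ reduces, via $B_0 = (3I-M)^{-1}$, to an algebraic identity which is verified by solving the linear system $\vec{s}^T(3I-M) = \vec{h}^T$ explicitly and invoking $\Lambda^d(\Lambda-2)=1$. For the inductive step, the recurrence for $U_n$ is matched against a parallel recurrence in $i$ for $\vec{h}^T B_i\vec{e}_0$, obtainable from the geometric-sum expansion $B_i = (3I-M)^{-1} + 2(M-2I)\bigl(\sum_{k=0}^{i-1} M^k\bigr)(3I-M)^{-1}$; the componentwise estimate on $\vec{\delta}^T\vec{r}_n$ then follows from the explicit description of $\vec{r}_n$ given in Section \ref{sec:palindrome}.

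\textbf{Main obstacle.} The principal difficulty is the periodic reset at $n \equiv 0 \bmod 2d$, where $i$ wraps from $2d-1$ back to $0$ and the right-hand side $(3-\Lambda)\vec{h}^T B_i\vec{e}_0$ changes discontinuously while $U_n$ evolves smoothly. We handle this by a full-period analysis over any $2d$ consecutive indices: the slack accumulated between resets, controlled by $\Lambda^d(\Lambda-2)=1$, is just enough to absorb the jump. This is the point at which the specific choice of weights $\vec{h} = (\underbrace{1,\ldots,1}_{d},2^0,2^1,\ldots,2^d)$ interacts most delicately with the defining arithmetic of $\Lambda$.
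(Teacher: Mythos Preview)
Your algebraic setup is correct: the reformulation $U_n\le (3-\Lambda)\vec{h}^TB_i\vec{e}_0$, the identity $A+I=(3I-M)^{-1}$, the scalar recurrence $U_{n+1}=3U_n-\vec{\delta}^T\vec{r}_n$, and the computation of the entries of $\vec{\delta}$ are all right. The gap is in the inductive step itself. Knowing only $U_n\le R_i$ (with $R_i:=(3-\Lambda)\vec{h}^TB_i\vec{e}_0$) gives $U_{n+1}\le 3R_i-\vec{\delta}^T\vec{r}_n$, so you would need the lower bound $\vec{\delta}^T\vec{r}_n\ge 3R_i-R_{i+1}$. But $\vec{\delta}^T\vec{r}_n$ depends on $n$, not only on $i$; the vector $\vec{\delta}$ has entries of both signs; and nothing in Section~\ref{sec:palindrome} beyond $\vec{r}_n=M^n\vec{e}_{2d}$ is available. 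More fundamentally, the factor $3$ in your recurrence amplifies any slack in the inductive hypothesis geometrically, so an induction that tracks only an upper bound on $U_n$ cannot close. Your ``full-period'' remedy does not help: iterating $2d$ steps gives $U_{n+2d}=3^{2d}U_n-\vec{\delta}^T\bigl(\sum_{k=0}^{2d-1}3^{2d-1-k}M^k\bigr)\vec{r}_n$, and the second term depends on $\vec{r}_n$, not on $U_n$, so there is no contraction.

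The paper takes a completely different, spectral route. The decisive fact (Lemma~\ref{lem:ReseniRekurence}) is that the chosen weight vector $\vec{h}$ is orthogonal to every right eigenvector of $M$ associated with the eigenvalues in $S_\beta$ (the roots of $t^d-t^{d-1}-\cdots-1$); hence $\vec{h}\,M^n\vec{e}_{2d}=\sum_{\lambda\in S_\Lambda}c_\lambda\lambda^n$ with explicit $c_\lambda$. In particular the dominant term with $\lambda=\Lambda$ cancels in $\vec{h}\bigl(A-\tfrac{\Lambda-2}{3-\Lambda}I\bigr)M^n\vec{e}_{2d}$, and the remaining $d$ conjugates satisfy $|\lambda|<1$, giving a uniform bound $<1$ on the left of \eqref{eq:ProTypI}. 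Independently, a direct computation shows $\vec{h}\,(3I-M)^{-1}\vec{e}_0=\tfrac{3^d}{3^d-1}>1$ and the extra part of $B_i$ is nonnegative, so the right side of \eqref{eq:ProTypI} exceeds $1$. No induction is needed. The orthogonality $\vec{h}\,\vec{v}_\lambda=0$ for $\lambda\in S_\beta$ is precisely why this particular $\vec{h}$ works; any argument that bypasses it will, one way or another, have to reprove it, and your plan does not indicate how.
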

\begin{proof}
Using Lemma~\ref{lem:explicitBS}, the inequality \eqref{eq:nerovnost} can be rewritten as $\vec{h}\,\vec{f}_n + \vec{h}\,M^n\,\vec{e}_{2d}  \leq  \frac{1}{3-\Lambda} \,\vec{h}\, M^{n}\vec{e}_{2d}$ for $n \in \mathbb N$. Applying Lemma~\ref{lem:pomocnaf}, we get 
\begin{equation}\label{eq:ProTypI}
   \vec{h}\, \Bigl( A - \tfrac{\Lambda-2}{3-\Lambda}\,I\Bigr)\,M^n \,\vec{e}_{2d}\ \leq \ \vec{h}\, B_i \vec{e}_0  \qquad \text{for } n \in \N. 
\end{equation}
We will show that the left side of the inequality~\eqref{eq:ProTypI} is $<1$, while the right side is $>1$. This will prove the statement. 
\begin{description}
\item[The left side of \eqref{eq:ProTypI}] \quad Using  Lemma \ref{lem:ReseniRekurence}, we have
  \begin{equation}\label{eq:KonkreteTyp1} \vec{h}\Bigl(A - \tfrac{\Lambda - 2}{3-\Lambda}I\Bigr) M^n\,\vec{e}_{2d}  = \sum_{\lambda \in S_{\Lambda}\setminus\{\Lambda\}}  \tfrac{1}{(d+1)\lambda-2d}\Bigl(\tfrac{\lambda - 2}{3-\lambda}-\tfrac{\Lambda - 2}{3-\Lambda}\Bigr)\lambda^{n+d+1}\,,\end{equation}
  where $S_\Lambda$ is the set of algebraic conjugates of $\Lambda$.
  Let us estimate the modulus of the left side. Each summand may be estimated in the same way. We use the fact that for $\lambda \neq \Lambda $ we have $|\lambda| < 1$,  and moreover
  
  $|(d+1)\lambda -2d| \geq 2d- |(d+1)\lambda| \geq d-1$;

  $\Bigl|\tfrac{(\lambda -2)\lambda^d}{3-\lambda}\Bigr| = \Bigl|\tfrac{1}{3-\lambda}\Bigr|\leq \tfrac12$;

  $\frac{\Lambda - 2}{3-\Lambda} = \tfrac{1}{\Lambda^d-1}\leq \tfrac{1}{2^d-1}$. 

  Since the sum has $d$ summands, the modulus of the left side is $\leq \frac{d}{d-1}(\frac12 + \tfrac{1}{2^d-1})$. This estimate decreases with increasing value $d$. As we consider $d\geq 3$, we get
$\frac{d}{d-1}(\frac12 + \tfrac{1}{2^d-1}) \leq \tfrac{27}{28}< 1$.

\item[The right side of \eqref{eq:ProTypI}]  \quad   It is easy to verify that  the first  column of the matrix $(3I-M)^{-1}$, i.e., the vector  $(3I-M)^{-1}\vec{e}_0$, equals 
$
\tfrac{2}{3^{2d}-1}\bigl(3^{2d-1},3^{2d-2}, \ldots, 3^1, 3^0,1  \bigr)^T$,  and $\vec{h}\, (M-2I) = (\underbrace{0,0, \ldots, 0}_{d-\text{times}}, \underbrace{1,1,\ldots, 1}_{(d+1)-\text{times}} )$.

Let us write $B$ in the form $B = B' + B"$, where $$B'= (3I-M)^{-1}\qquad \text{and} \qquad  B"=  2 (M-2I)(M-I)^{-1}(M^i-I)(3I-M)^{-1} .$$

A straightforward calculation gives 
$\vec{h} \,B'\,\vec{e}_0 = \vec{h}\,(3I-M)^{-1}\vec{e}_0 = \tfrac{3^d}{3^d-1}\,.
$

The number $\vec{h} \,B"\,\vec{e}_0$ is non-negative because it is product of a~non-negative vector $2\vec{h}(M-2I)$, of a~non-negative matrix $(M-I)^{-1} (M^i-I) = M^{i-1} + M^{i-2} + \cdots+ I $ and of a~non-negative vector $(3I - M)^{-1}\vec{e}_0$.

We can conclude that the right side of \eqref{eq:ProTypI} is  $\geq \tfrac{3^d}{3^d-1}>1$. 
\end{description}

\end{proof} 
\item[ Bispecial factors of Type II\ ]  
\begin{lemma}\label{lem:Dukaz nerovnostiII}  The inequality~\eqref{eq:nerovnost} holds for all bispecial factors of Type II and their return words.      
\end{lemma}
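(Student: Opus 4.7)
The plan is to mirror the structure of Lemma~\ref{lem:Dukaz nerovnostiI}. By Item~3 of Lemma~\ref{lem:explicitBS}, the inequality~\eqref{eq:nerovnost} for Type II bispecial factors reduces to $\vec{h}\vec{f}_{n+2d} \leq \tfrac{1}{3-\Lambda}\vec{h} M^{n+2d}\vec{e}_0$ for $n \in \N$. Using Lemma~\ref{lem:pomocnaf} together with $\vec{e}_0 = (M-2I)\vec{e}_{2d}$, I would rewrite this equivalently as
\begin{equation*}
\vec{h}\!\left(A - \tfrac{M-2I}{3-\Lambda}\right)M^{n+2d}\vec{e}_{2d} \leq \vec{h} B_i \vec{e}_0, \qquad i = n \bmod 2d.
\end{equation*}
By Lemma~\ref{lem:ReseniRekurence}, the operator on the left has eigenvalue $\tfrac{(\lambda-2)(\lambda-\Lambda)}{(3-\lambda)(3-\Lambda)}$ on each eigenvector of $M$, which vanishes at $\lambda=\Lambda$. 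Applying the defining identity $(\lambda-2)\lambda^d=1$ valid for every $\lambda \in S_\Lambda$, the LHS simplifies to
\begin{equation*}
\frac{1}{3-\Lambda}\sum_{\lambda \in S_\Lambda\setminus\{\Lambda\}} \frac{\lambda^{n+2d+1}(\lambda-\Lambda)}{((d+1)\lambda - 2d)(3-\lambda)}.
\end{equation*}

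For the right-hand side, I would expand $B_i = (3I-M)^{-1} + 2(M-2I)(M-I)^{-1}(M^i-I)(3I-M)^{-1}$ and apply $(\lambda-2)\lambda^d = 1$ in the spectral decomposition to obtain the closed form
\begin{equation*}
\vec{h} B_i \vec{e}_0 = u_d + 2\sum_{k=0}^{i-1}u_k, \qquad u_k := \sum_{\lambda \in S_\Lambda}\frac{\lambda^k}{p'(\lambda)(3-\lambda)},
\end{equation*}
where $p(t) = t^{d+1}-2t^d-1$. The value $u_d = \vec{h}(3I-M)^{-1}\vec{e}_0 = 3^d/(3^d-1)$ is already obtained via the residue argument in Lemma~\ref{lem:Dukaz nerovnostiI}, and each $u_k$ is a positive multiple of $1/(3^d-1)$, as follows from the recurrence $u_{k+1} = 3u_k - s_k$ with $s_k = \sum_{\lambda\in S_\Lambda}\lambda^k/p'(\lambda)$ a nonnegative integer.

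The proof then reduces to estimating the spectral sum. Splitting $\tfrac{\lambda-\Lambda}{(3-\lambda)(3-\Lambda)} = \tfrac{1}{3-\lambda}-\tfrac{1}{3-\Lambda}$ in the LHS and using $|\lambda|<1$ for $\lambda\in S_\Lambda\setminus\{\Lambda\}$, $|(d+1)\lambda - 2d|\geq d-1$, $|3-\lambda|\geq 2$, together with $\tfrac{1}{3-\Lambda}\leq \tfrac{2^d}{2^d-1}$, will give a uniform absolute bound on the LHS. For residue class $i = 0$ this bound must be shown to be at most $u_d$; for $i \geq 1$, the additional positive terms $2\sum_{k=0}^{i-1}u_k$ on the RHS will provide further slack.

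The hard part is that the straightforward term-by-term bound $\tfrac{d}{d-1}\bigl(\tfrac{1}{2}+\tfrac{2^d}{2^d-1}\bigr)$ on the LHS is not tight enough to be bounded by $u_d$ for small~$d$. A useful sharpening comes from the Hamilton--Cayley identity $(M-2I)^2M^{2d}=I$, which gives
\begin{equation*}
\vec{w}_n - \vec{r}_n = (3I-M)^{-1}M^n\vec{e}_{2d} - B_i\vec{e}_0,
\end{equation*}
so the target inequality is equivalent to $\vec{h}(3I-M)^{-1}M^n\vec{e}_{2d} - \tfrac{1}{\Lambda^d-1}\vec{h}M^{n+2d}\vec{e}_0 \leq \vec{h}B_i\vec{e}_0$. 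On this reformulation the dominant $\Lambda$-contributions cancel thanks to $\Lambda^d(3-\Lambda)=\Lambda^d-1$, and the non-$\Lambda$ remainder collapses via the identity $\tfrac{\Lambda^d-\lambda^d}{\Lambda^d-1} = \tfrac{\lambda-\Lambda}{(3-\Lambda)(\lambda-2)}$ to a single spectral sum controlled in absolute value by $u_d$, completing the argument.
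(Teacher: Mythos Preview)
Your setup and spectral reduction of the left-hand side are correct and agree with the paper. The gap is in the endgame: the ``sharpening'' you propose via the Hamilton--Cayley identity is circular. If you carry it through, the quantity
\[
\vec{h}(3I-M)^{-1}M^n\vec{e}_{2d}-\tfrac{1}{\Lambda^d-1}\,\vec{h}M^{n+2d}\vec{e}_0
\]
reduces (after cancelling the $\Lambda$-term and using your identity for $\tfrac{\Lambda^d-\lambda^d}{\Lambda^d-1}$) to \emph{exactly} the same spectral sum
\[
\sum_{\lambda\in S_\Lambda\setminus\{\Lambda\}}\frac{\lambda^{n+2d+1}(\lambda-\Lambda)}{((d+1)\lambda-2d)(3-\lambda)(3-\Lambda)}
\]
that you already had. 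So you have not escaped your crude bound $\tfrac{d}{d-1}\bigl(\tfrac12+\tfrac{2^d}{2^d-1}\bigr)$, and the final clause ``controlled in absolute value by $u_d$'' is unsupported.

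The observation you are missing, and the entire content of the paper's proof, is that this Type~II left-hand side is \emph{literally} the Type~I left-hand side of~\eqref{eq:ProTypI} at a shifted index. Rewriting the Type~I summand as in~\eqref{eq:skorostejne2} gives $\tfrac{\lambda-\Lambda}{(3-\lambda)(3-\Lambda)}\lambda^{m+d+1}$; your Type~II summand is $\tfrac{\lambda-\Lambda}{(3-\lambda)(3-\Lambda)}\lambda^{n+2d+1}$, i.e.\ the Type~I expression at $m=n+d$. Since Lemma~\ref{lem:Dukaz nerovnostiI} already proves the Type~I left side is $<1$ for every $m\ge 0$ and $\vec{h}B_i\vec{e}_0\ge 3^d/(3^d-1)>1$ for every $i$, the Type~II inequality follows at once---no new estimate, and no separate spectral analysis of $\vec{h}B_i\vec{e}_0$, is needed.
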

\begin{proof} 
By Lemma \ref{lem:explicitBS}, the inequality \eqref{eq:nerovnost} can be rewritten as $\vec{h}\vec{f}_{n+2d} \leq  \frac{1}{3-\Lambda} \,\vec{h}\, M^{n+2d}\vec{e}_{0}$ for $n \in \N$. Equivalently, $\vec{h}\vec{f}_{n} \leq  \frac{1}{3-\Lambda} \,\vec{h}\, M^{n}\vec{e}_{0}$ for $n \in \N$, $n\geq 2d$. We again use the explicit form of $\vec{f}_n$ from Lemma \ref{lem:pomocnaf} and the equality $\vec{e}_0 = (M-2I)\vec{e}_{2d}$ to obtain 

\begin{equation}\label{eq:ProTypII}
   \vec{h}\, \Bigl( A - \tfrac{1}{3-\Lambda}\,(M-2I)\Bigr)\,M^n \,\vec{e}_{2d}\ \leq \ \vec{h}\, B_i \vec{e}_0 \qquad \text{for } n \in \N, n\geq 2d.
\end{equation}

Using  Lemma \ref{lem:ReseniRekurence}, we obtain
  $$  \vec{h}\, \Bigl( A - \tfrac{1}{3-\Lambda}\,(M-2I)\Bigr)\,M^n \,\vec{e}_{2d} = \sum_{\lambda \in S_{\Lambda}\setminus\{\Lambda\}}  \tfrac{1}{(d+1)\lambda-2d}\Bigl(\tfrac{\lambda - 2}{3-\lambda}-\tfrac{\lambda - 2}{3-\Lambda}\Bigr)\lambda^{n+d+1}\,. $$

Let us rewrite the expression
\begin{equation}\label{eq:skorostejne}
\Bigl(\tfrac{\lambda - 2}{3-\lambda}-\tfrac{\lambda - 2}{3-\Lambda}\Bigr)\lambda^{n+d+1}= (\lambda - 2) \,\tfrac{\lambda -\Lambda}{(3-\lambda)(3-\Lambda)}\,\lambda^{n+d+1}= \tfrac{\lambda -\Lambda}{(3-\lambda)(3-\Lambda)}\,\lambda^{n+1}\,.\end{equation}

Let us emphasize that we used the equality $(\lambda-2)\lambda^d = 1$ for each algebraic conjugate of $\Lambda$, i.e., for $\lambda \in S_\Lambda$. 

Let us compare the expression in~\eqref{eq:skorostejne} with a~rewritten expression from~\eqref{eq:KonkreteTyp1}.

\begin{equation}\label{eq:skorostejne2}\Bigl(\tfrac{\lambda - 2}{3-\lambda}-\tfrac{\Lambda - 2}{3-\Lambda}\Bigr)\lambda^{n+d+1} = \tfrac{\lambda -\Lambda}{(3-\lambda)(3-\Lambda)}\,\lambda^{n+1+d}\,.\end{equation}
\medskip

\noindent Therefore from the validity of inequality~\eqref{eq:ProTypI} for all $n \in \N$, the inequality~\eqref{eq:ProTypII} follows for all $n \geq d$.    
\end{proof}
\end{description}
\begin{proof}[Proof of Theorem~\ref{thm:hlavni}]
On the one hand, Lemmas~\ref{lem:NonPalNE}, \ref{lem:Dukaz nerovnostiI}, and~\ref{lem:Dukaz nerovnostiII} show that for every bispecial factor $w$ in $\uu_{2d+1}$ and any of its return words $r$, the inequality~\eqref{eq:nerovnost} holds, i.e., $E(\pi(\uu_{2d+1}))\leq 1+\frac{1}{3-\Lambda}=E^*(\pi(\uu_{2d+1}))$. 

On the other hand, by Theorem~\ref{thm:E*_morphic_image}, we have $E^*(\pi(\uu_{2d+1}))=E^*(\uu_{2d+1})=1+\frac{1}{3-\Lambda}$. The assumptions of Theorem~\ref{thm:E*_morphic_image} are met since $\uu_{2d+1}$ is a fixed point of a primitive morphism and consequently has uniform letter frequencies and all factors of $\pi(\uu_{2d+1})$ of length $\geq 2^d$ have a synchronization point. Since $E(\pi(\uu_{2d+1}))\geq E^*(\pi(\uu_{2d+1}))$, the proof of Theorem~\ref{thm:hlavni} is complete. 
\end{proof}
\section{Comments on images of $\uu_{2d+1}$ under morphisms to smaller alphabets}\label{sec:comments}

The morphism  $\pi$ we work with in this paper maps the sequence $\uu_{2d+1}$ over the alphabet of size $2d+1$ onto an alphabet of size $d+2^d$.
However, as follows from the papers~\cite{CuMoRa2020, CuMoPe2024, DvPe2025} for $d=2$  and $d=3$, there exists a~morphism $\eta$ onto the alphabet of size $d$ such that $E^*(\uu_{2d-1}) = E(\eta(\uu_{2d-1}))$.  To prove at least the easier part of Conjecture \ref{con:opet}, namely $RT(\mathcal{C}_d) \leq E^*(\uu_{2d-1})$, it would be sufficient to find such a morphism for each $d$. The following propositions could provide a hint on how to find such morphisms.

\begin{proposition} Let  $d \in \N$ and $\eta:\{0,1,\ldots, 2d\}^* \mapsto \mathcal{B}^*$ be a morphism  satisfying 
\begin{enumerate}
    \item there exists a finite set $\mathcal{F} \subset \mathcal{B}^*$ such that every sufficiently long bispecial factor in $\eta(\uu_{2d+1})$ and its return word have the form $p\eta(w)s$  and $p\eta(r)p^{-1}$, respectively,   with $p,s \in \mathcal{F}$ and $w$ a~bispecial factor of $\uu_{2d+1}$ and $r$ a~return word to $w$ in $\uu_{2d+1}$.  
    \item $E(\eta(\uu_{2d+1}))= E^*(\uu_{2d+1})$. 
\end{enumerate}

Then the integer row vector $\bigl(|\eta(0)|, |\eta(1)|, \ldots, |\eta(2d)|\bigr)$ is orthogonal to an eigenvector of the incidence matrix $M$ of the morphism $\varphi_{2d+1}$ corresponding to the eigenvalue $\beta\in (1,2)$.
\end{proposition}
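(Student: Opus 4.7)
The plan is to convert hypotheses~(1)--(2) into asymptotic inequalities on the Parikh vectors of Type~I and Type~II palindromic bispecial factors of $\uu_{2d+1}$, and then extract the orthogonality relation by spectral analysis of the incidence matrix $M$. Set $\vec{\ell}=(|\eta(0)|,\ldots,|\eta(2d)|)$ so that $|\eta(u)|=\vec{\ell}\,\vec{u}$ for every word $u$. By hypothesis~(1), for every sufficiently long bispecial $w'$ of $\eta(\uu_{2d+1})$ with shortest return word $r'$ we have $|w'|=|p|+|s|+\vec{\ell}\,\vec{w}$ and $|r'|=\vec{\ell}\,\vec{r}$, with $|p|+|s|$ uniformly bounded (because $\mathcal{F}$ is finite), where $w$ is a bispecial factor of $\uu_{2d+1}$ and $r$ its shortest return word. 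Theorem~\ref{thm:FormulaForE} together with hypothesis~(2) yields $\sup_n|w'_n|/|r'_n|=1/(3-\Lambda)$, so for every sufficiently long bispecial $w_n$ of Type~I or Type~II
\[
\vec{\ell}\,\vec{w}_n - \tfrac{1}{3-\Lambda}\vec{\ell}\,\vec{r}_n \le 0.
\]

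Substituting the Parikh formulae from Lemma~\ref{lem:explicitBS} and using the identity $A+I=(3I-M)^{-1}$, the Type~I inequality becomes
\[
\vec{\ell}\bigl((3I-M)^{-1}-\tfrac{1}{3-\Lambda}I\bigr)M^n \vec{e}_{2d}\;\le\; \vec{\ell}\, B_i\, \vec{e}_0+O(1),
\]
and the Type~II inequality becomes
\[
\vec{\ell}\,(M-2I)\bigl((3I-M)^{-1}-\tfrac{1}{3-\Lambda}I\bigr)M^{n+2d}\vec{e}_{2d}\;\le\; \vec{\ell}\, B_i\, \vec{e}_0+O(1),
\]
both right-hand sides being bounded in $n$. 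By the appendix, the characteristic polynomial of $M$ factors as $(t^{d+1}-2t^d-1)(t^d-t^{d-1}-\cdots-1)$, so the spectrum of $M$ consists of the Perron value $\Lambda\in(2,3)$, the unique real root $\beta\in(1,2)$ of the second factor, and further eigenvalues of modulus strictly less than $1$. Expand $\vec{e}_{2d}=\sum_j c_j v_j$ in a basis of right eigenvectors. The operator $(3I-M)^{-1}-\tfrac{1}{3-\Lambda}I$ annihilates $v_\Lambda$ and acts on $v_{\lambda_j}$ by the scalar $\mu_j:=(\lambda_j-\Lambda)/((3-\lambda_j)(3-\Lambda))$; at $\lambda_j=\beta$ this is strictly negative, while multiplying by $(M-2I)$ introduces the extra factor $\beta-2<0$, so the Type~II scalar $\nu_\beta:=(\beta-2)\mu_\beta$ is strictly positive.

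Since $\beta$ is the unique non-$\Lambda$ eigenvalue of $M$ with modulus exceeding $1$, the only potentially unbounded contribution on the Type~I left-hand side is $c_\beta(\vec{\ell} v_\beta)\,\mu_\beta\,\beta^n$; boundedness of the right-hand side together with $\mu_\beta<0$ forces $c_\beta(\vec{\ell} v_\beta)\ge 0$. The analogous term on the Type~II left-hand side is $c_\beta(\vec{\ell} v_\beta)\,\nu_\beta\,\beta^{n+2d}$ with $\nu_\beta>0$, forcing instead $c_\beta(\vec{\ell} v_\beta)\le 0$. Combining the two, $c_\beta(\vec{\ell} v_\beta)=0$; since $\beta$ is a simple eigenvalue and $c_\beta\ne 0$ (as one checks directly from the explicit left $\beta$-eigenvector of $M$), we conclude $\vec{\ell} v_\beta=0$, the claimed orthogonality. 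The main obstacle is the sign bookkeeping in the second paragraph: one must verify that Type~I and Type~II really produce opposite one-sided bounds, and the decisive point is that the extra factor $(M-2I)$ on the Type~II side contributes $\beta-2<0$, flipping the sign of the dominant subleading coefficient; without this flip both types would give the same inequality and the argument would collapse.
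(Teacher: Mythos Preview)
Your proof is correct and follows essentially the same approach as the paper: both reduce hypotheses (1)--(2) via Lemmas~\ref{lem:pomocnaf} and~\ref{lem:explicitBS} to the pair of Type~I and Type~II inequalities, expand $\vec{\ell}\,M^n\vec{e}_{2d}$ over the eigenbasis of $M$, note that the $\Lambda$-contribution vanishes while the $\beta$-contribution carries opposite signs in the two inequalities (precisely because of the extra factor $(M-2I)$ contributing $\beta-2<0$ on the Type~II side), and conclude that the $\beta$-coefficient must be zero. Your identity $A+I=(3I-M)^{-1}$ is a cosmetic repackaging of the paper's operators $A-\tfrac{\Lambda-2}{3-\Lambda}I$ and $A-\tfrac{1}{3-\Lambda}(M-2I)$, and you make the verification $c_\beta\ne 0$ (via the last entry of the left $\beta$-eigenvector) slightly more explicit than the paper does, but the argument is the same.
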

\begin{proof}
Let $\vec{h}$ denote the row vector $\bigl(|\eta(0)|, |\eta(1)|, \ldots, |\eta(2d)|\bigr)$.

By Item 1 of the assumptions,  the lengths of a~sufficiently long bispecial factor  in $\eta(\uu_{2d+1})$ and any its return word is $\vec{h}\,\vec{w} + |s|+ |p|$  and $\vec{h}\,\vec{r}$, respectively, where $s,  p \in \mathcal{F}$.  

By Item 2 of the assumptions, $\vec{h}\,\vec{w} + |s|+ |p| \leq \frac{1}{3-\Lambda}\, \vec{h}\,\vec{r} $.   Let us specify  this inequality for the sufficiently long palindromic bispecial factors of $\uu_{2d+1}$.

For a sufficiently long  bispecial factor $w_n$ of Type I, the inequality reads (cf. \eqref{eq:ProTypI})
\begin{equation}\label{eq:ProTypIObecne}
   \vec{h}\, \Bigl( A - \tfrac{\Lambda-2}{3-\Lambda}\,I\Bigr)\,M^n \,\vec{e}_{2d} + |s_n| + |p_n|\ \leq \ \vec{h}\, B_i \vec{e}_0,  
\end{equation}

and for a bispecial factor $w_n$ of Type II (cf. \eqref{eq:ProTypII})
\begin{equation}\label{eq:ProTypIIObecne}
  \vec{h}\, \Bigl( A - \tfrac{1}{3-\Lambda}\,(M-2I)\Bigr)\,M^n \,\vec{e}_{2d} + |s_n| + |p_n|\ \leq \ \vec{h}\, B_i \vec{e}_0.  
\end{equation}

Let $S_M$ be the set of all eigenvalues of the incidence matrix $M$ of the morphism $\varphi_{2d+1}$.  Recall (see \eqref{eq:rovnice} in Appendix) that the characteristic polynomial of $M$ has just two eigenvalues outside the unit circle, $\Lambda \in (2,3)$ and $\beta \in (1,2)$. 

By Lemma \ref{lem:obecnaM},  $\vec{h}\, M^n \vec{e}_{2d}$ equals 
$ \sum_{\lambda \in  S_M } d_\lambda \lambda^n$ for some constants  $d_\lambda$'s.
 Moreover, the constant $d_\lambda$  is zero if and only if $\vec{h}$ is orthogonal to an eigenvector corresponding to the eigenvalue $\lambda \in S_M$.

Let us stress that $\vec{h}\, M^n \vec{e}_{2d}$ is a sum of $2d+1$ expressions of the type $d_\lambda \lambda^n$.  All but two of them (corresponding to $\lambda = \Lambda$ and $\lambda=\beta $) are bounded sequences.

As $A=(M-2I)(3I-M)^{-1}$ and $s_n, p_n$ belong to a finite set $\mathcal{F}\subset \mathcal{B}^*$,  the inequalities   \eqref{eq:ProTypIObecne} and  \eqref{eq:ProTypIIObecne} have the form 
$$
d_\beta \, (\tfrac{\beta -2}{3-\beta} - \tfrac{\Lambda -2}{3-\Lambda}) \, \beta^n \  \leq  \ a_n\,, \quad \text{where $(a_n)$ \ is a bounded sequence}
$$ and 
 $$
d_\beta \, (\tfrac{\beta -2}{3-\beta} - \tfrac{\beta -2}{3-\Lambda}) \, \beta^n \  \leq  \ b_n\,, \quad \text{where $(b_n)$ \ is a bounded sequence}.
$$ 

For the last step of the proof,  we take advantage of the facts that $\lim_{n \to \infty}\beta^n  = +\infty$  and $\beta^d(\beta -2) = -1$, see \eqref{eq:rovnice}.  Let us adjust the expressions on the left sides of the inequalities. 
$$\tfrac{\beta -2}{3-\beta} - \tfrac{\Lambda -2}{3-\Lambda}=\tfrac{\beta -\Lambda}{(3-\beta)(3-\Lambda)} \qquad \text{and}\qquad  
\tfrac{\beta -2}{3-\beta} - \tfrac{\beta -2}{3-\Lambda}=(\beta -2)\tfrac{\beta -\Lambda}{(3-\beta)(3-\Lambda)}= - \tfrac{1}{\beta^d} \tfrac{\beta -\Lambda}{(3-\beta)(3-\Lambda)}. $$
Obviously, if the coefficient $d_\beta$ is non-zero, the left-hand sides of the previous inequalities have opposite infinite limits. This means that at least one inequality does not hold, which is impossible. Hence,  $d_\beta = 0$, equivalently,  the vector $\vec{h}$ is orthogonal to an eigenvector  of $M$ corresponding to the eigenvalue $\beta$.  
\end{proof}

Last but not least, let us explain that it seems reasonable to search for a suitable morphism $\eta$ in the class $P_{\mathrm {ret}}$ defined in~\cite{BaPeSt2011}.
\begin{definition}\label{def:Pret}
A morphism $\eta: {\mathcal A}^* \to {\mathcal B}^*$ is of {\em class} $P_{\mathrm {ret}}$ if $\eta$ is injective and there exists a palindrome $p \in {\mathcal B}^*$ such that 
for all $i \in \mathcal A$ 
$$\eta(i)=p q_i\,,$$
where $q_i$ is a palindrome and $\eta(i)p$ is a complete return word to $p$.
\end{definition}

Durand~\cite{Dur98} introduced the notion of derived sequence: Let ${\bf v}$ be a~uniformly recurrent sequence over $\mathcal{A}$ and let $x$ be a prefix of ${\bf v}$. There exist only finitely many return words to $x$ in ${\bf v}$,  denote them $r_0, r_1, \ldots, r_{k-1}$.  The sequence ${\bf v}$  can be written as a~concatenation of the return words, say ${\bf v} = r_{i_0}r_{i_1}r_{i_2}\cdots$. The sequence $i_0i_1i_2 \ldots$ over the alphabet $\{0,1,\ldots, k-1\}$ is called the {\em derived sequence} of ${\bf v}$  to the prefix $x$.    

On the one hand, as proven in~\cite{BaPeSt2011}, if $p$ is a palindromic factor of a~uniformly recurrent rich sequence~${\bf v}$, then a suffix ${\bf v'}$ of ${\bf v}$ is an image of a~rich sequence $\uu$ under a~morphism from the class $P_{\mathrm {ret}}$ and $\uu$ is the derived sequence of ${\bf v'}$ to the prefix $p$.  
On the other hand, the image of a~rich sequence $\uu$ by a morphism $\eta \in P_{\mathrm {ret}}$ may not be rich. 

\medskip

For morphisms $\eta$ of class $P_{\mathrm {ret}}$, to show richness of $\eta(\uu_d)$, we do not have to check the bilateral orders of all bispecial factors, as required by Theorem~\ref{thm:richness_eqv}, Item 3. It suffices to check the bilateral orders of the bispecial factors that are contained in the letter images by $\pi$ followed by the common palindrome $p$. See the next proposition. 

\begin{proposition} 
Let $\eta: {\{0,1,\dots, d-1\}}^* \to {\mathcal B}^*$ be a morphism of class $P_{\mathrm {ret}}$ from Definition~\ref{def:Pret}. 
If every bispecial factor $w$ in $\eta(\uu_d)$, which does not contain the palindrome $p$, satisfies
\begin{itemize}
\item ${\mathrm b}(w)=0$ if $w$ is not a palindrome;
\item ${\mathrm b}(w)=\# {\mathrm Pext}(w)-1$ if $w$ is a palindrome,
\end{itemize}
then $\eta(\uu_d)$ is rich.
\end{proposition}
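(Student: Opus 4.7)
The plan is to verify condition~3 of Theorem~\ref{thm:richness_eqv} for every bispecial factor of $\eta(\uu_d)$. The hypothesis already covers those not containing $p$, so the remaining task is to treat bispecial factors that contain $p$, using the $P_{\mathrm{ret}}$ structure of $\eta$ together with the richness of $\uu_d$ (listed among the properties of $\uu_d$ in Section~2).

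First I would check that $\mathcal{L}(\eta(\uu_d))$ is closed under reversal. Because $p$ and each $q_i$ are palindromes, one has the identity $p\cdot\overline{\eta(v)}=\eta(\overline{v})\cdot p$ for every $v\in\{0,\dots,d-1\}^*$; combined with closure of $\mathcal{L}(\uu_d)$ under reversal, any occurrence of a factor $f$ inside some $\eta(v)$ yields an occurrence of $\overline{f}$ inside $\eta(\overline{v})\cdot p$, which is itself a factor of $\eta(\uu_d)$ (the trailing $p$ being the start of the next letter image). Second, the condition that $\eta(i)p$ is a complete return word to $p$ implies that $p$ occurs in $\eta(\uu_d)$ exactly at the letter boundaries. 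Any bispecial factor $w$ containing $p$ therefore admits a canonical synchronized decomposition $w=s\cdot\eta(v)\cdot p\cdot r$, where $v$ is a factor of $\uu_d$, $s$ is a (possibly empty) suffix of $q_a$ for some letter $a$ preceding $v$ in a given occurrence, and $r$ is a (possibly empty) prefix of $q_b$ for the letter $b$ whose $p$-block is the last $p$ in $w$. Using the identity above and injectivity of $\eta$, $w$ is a palindrome iff $s=\overline{r}$ and $v$ is a palindrome.

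Next, I would set up a correspondence between extensions of $w$ in $\eta(\uu_d)$ and extensions of $v$ in $\uu_d$: a left extension of $w$ by $c\in\mathcal{B}$ comes from a letter $a$ (preceding $v$ in $\uu_d$) such that $s$ is a suffix of $q_a$, where $c$ is the character of $q_a$ immediately before $s$ (or the last letter of $p$ in the degenerate case $s=q_a$); the situation on the right is symmetric. Bispeciality of $w$ forces $v$ to be bispecial in $\uu_d$, and by richness of $\uu_d$ the bilateral-order condition from Theorem~\ref{thm:richness_eqv}(3) holds for $v$. Transferring it along the correspondence—and matching $\mathrm{Pext}$-sets via $s=\overline{r}$ in the palindromic case—yields the required condition for $w$.

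The main obstacle lies precisely in this transfer: the map sending a valid preceding letter $a$ to its induced preceding character $c\in\mathcal{B}$ need not be injective, as two distinct palindromes $q_{a_1}$ and $q_{a_2}$ may share the same character just before their common suffix $s$. The delicate bookkeeping is to show that the alternating sum $\mathrm{b}(w)=\#\mathrm{Bext}(w)-\#\mathrm{Lext}(w)-\#\mathrm{Rext}(w)+1$, and $\#\mathrm{Pext}(w)$ in the palindromic case, transform correctly under such possible collapses, using the palindromicity of the $q_a$'s and injectivity of $\eta$. Once this is settled, combining with the hypothesis closes the verification of condition~3 for all bispecial factors of $\eta(\uu_d)$ and Theorem~\ref{thm:richness_eqv} delivers richness.
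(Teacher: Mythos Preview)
Your overall strategy matches the paper's: verify Theorem~\ref{thm:richness_eqv}(3) for bispecial factors containing $p$ by decomposing them as $w=s\,\eta(v)\,p\,r$ and relating extensions of $w$ in $\eta(\uu_d)$ to extensions of $v$ in $\uu_d$. Closure under reversal and the synchronization via the complete-return-word property of $\eta(i)p$ are handled as you describe.

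However, the part you flag as the ``main obstacle'' is a genuine gap that you do not close. The map from a left extension $a$ of $v$ to the induced left extension $c$ of $w$ is neither injective nor surjective onto $\mathrm{Lext}(v)$ in general, and there is no abstract mechanism by which $\mathrm{b}(w)$ and $\#\mathrm{Pext}(w)$ inherit the correct values from $\mathrm{b}(v)$ and $\#\mathrm{Pext}(v)$ under such collapses. Your proposal says ``once this is settled'' but offers no argument; palindromicity of the $q_a$'s and injectivity of $\eta$ are not by themselves sufficient to control the alternating sum. A second, related issue is that a single bispecial $v$ in $\uu_d$ can generate \emph{several} bispecial factors of $\eta(\uu_d)$, corresponding to different choices of which left/right extensions of $v$ one takes the longest common suffix/prefix over; your decomposition via ``a given occurrence'' does not distinguish these, whereas they can have different $s,r$ and different bilateral orders.

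The paper resolves exactly this difficulty not by an abstract transfer, but by importing from \cite{DvKlPe2025} the explicit description of $\mathrm{Bext}(v)$ for every bispecial $v$ of $\uu_d$: either $v$ has two left and two right extensions with a prescribed three-element $\mathrm{Bext}$ set, or $v=F_i$ with $\mathrm{Bext}(F_i)=\{iF_ij,\,jF_ii,\,(d{-}1)F_i(d{-}1): i<j\le d{-}1\}$. With these concrete extension sets in hand, the paper enumerates the possible bispecial factors $x\,\eta(v)\,p\,y$ (with $x=\mathrm{lcs}\{\eta(a),\eta(b)\}$, $py=\mathrm{lcp}\{\eta(c)p,\eta(d)p\}$) and checks the bilateral-order identity case by case, including the non-palindromic subcases arising when $y\ne\overline{x}$. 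To complete your proof you would need either to carry out this case analysis or to supply a general combinatorial lemma showing that the richness bilateral-order condition is preserved under $P_{\mathrm{ret}}$ images of rich sequences; the latter is not available in the literature and is precisely what the paper avoids by using the specific structure of $\uu_d$.
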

\begin{proof}
In order to prove the statement, it suffices to check the condition on bilateral orders from Theorem~\ref{thm:richness_eqv}, Item 3, for bispecial factors of $\eta(\uu_d)$ containing the palindrome $p$. Every bispecial factor $v$ in $\eta(\uu_d)$ containing $p$ arises from a bispecial factor $z$ in $\uu_d$ in the following way:
If $v=xp\cdots py$, where we highlight the first and the last occurrence of $p$ in $v$, then $v=x\eta(w)py$, where $w$ is a bispecial factor (possibly empty) in $\uu_d$ and $x={\mathrm lcs}\{\eta(a), \eta(b)\}$ and $py={\mathrm lcp}\{\eta(c)p, \eta(d)p\}$ for some $a,b$ left extensions of $w$ and $c,d$ right extensions of $w$. In the sequel, we use the knowledge of both-sided extensions of bispecial factors $w$ in $\uu_d$ from~\cite{DvKlPe2025}.
\begin{enumerate}
\item Assume $w$ has two left and two right extensions. 
\begin{itemize}
\item If $w$ is a palindrome, then ${\mathrm {Bext}}(w)=\{aw(d-1), (d-1)wa, (d-1)w(d-1)\}$ or ${\mathrm {Bext}}(w)=\{aw(d-1), (d-1)wa, awa\}$. Consequently, 
$v=x\eta(w)p\overline{x}$, where $x={\mathrm {lcs}}\{\eta(a), \eta(d-1)\}$. Since $\eta \in P_{\mathrm {ret}}$, the factor $v$ is a~palindrome and ${\mathrm b}(v)=0=\#{\mathrm {Pext}}(v)-1$.
\item If $w$ is not a~palindrome, then $\eta(w)p$ is not a palindrome and $v=x\eta(w)py$ is not a~palindrome, either.  Since ${\mathrm b}(w)=0$, clearly ${\mathrm b}(v)=0$.
\end{itemize}
\item Assume $w$ has more left or right extensions. Then $w=F_i=\varphi^{i-1}(0)\varphi^{i-2}(0)\cdots \varphi(0)0$ for some $0\leq i\leq d-3$ (for $i=0$ we set $w=\varepsilon$) and 
$${\mathrm {Bext}}(w)=\{iwj, jwi, (d-1)w(d-1) \ : \ i<j \leq d-1\}\,.$$
\begin{enumerate}
\item Consider $j\leq d-1$, then $iwj$ and $jwi$ give rise to the palindromic bispecial factor $v=x\eta(w)p\overline{x}$, where $x={\mathrm {lcs}}\{\eta(i), \eta(j)\}$. If $ax$ is a suffix of $\eta(i)$ and $bx$ is a suffix of $\eta(j)$, then $iwj, jwi$, resp. $(d-1)w(d-1)$ give rise to the both-sided extensions of $v$: $avb, bva$, resp. $bvb$ in case $j=d-1$. 

If $cx$ is a suffix of $\eta(k)$ for some $k\not =i,j$, then $iwk$ and $kwi$ give rise to the both-sided extensions $avc, cva$ of $v$. If $k=d-1$, then $cvc$ is also a both-sided extension. If $c=b$, the extensions $avc, cva$ are not new. If $c=a$, we get the new extension $awa$. If $c\not =a,b$, we have new extensions $avc, cva$, eventually $cvc$ for $k=d-1$.

To summarize, the set ${\mathrm {Bext}}(v)$ of both-sided extensions of $v$ is the union of the following sets (some of them may be empty):
\begin{itemize}
\item $\{avb, bva\}$;
\item $\bigcup_{i<k\leq d-1, k\not =j} \{c_k va, avc_k \ : \ c_kx \ \text{is suffix of $\eta(k)$ and $c_k\not=a,b$}\}$;
\item $\{ava \ : \ ax \ \text{is suffix of $\eta(k)$ for some $i<k\leq d-1$}\}$;
\item $\{c_{d-1}v c_{d-1}\ : \ c_{d-1}x \ \text{is suffix of $\eta(d-1)$ and $c_{d-1}\not =a$}\}\,. $
\end{itemize}
It is now straightforward to check that ${\mathrm b}(v)=\#{\mathrm {Pext}}(v)-1$.
\item Consider $j< d-1$, then $iwj$ and $(d-1)w(d-1)$ give rise to the bispecial factor $v=x\eta(w)py$, where $x={\mathrm {lcs}}\{\eta(i), \eta(d-1)\}$ and $py={\mathrm {lcp}}\{\eta(j)p, \eta(d-1)p\}$. If $y=\overline{x}$, we come back to the case (a). Assume thus $y\not =\overline{x}$, i.e., $v$ is not a~palindrome. Let us treat the case where $|y|>|x|$, i.e., $y$ has the prefix $\overline{x}$. The opposite case may be treated analogously.

If $ax$ is a suffix of $\eta(i)$ and $bx$ is a suffix of $\eta(d-1)$ and $pyc$ is a prefix of $\eta(j)p$ and $pyd$ is a prefix of $\eta(d-1)p$, then $iwj, (d-1)w(d-1), iw(d-1)$ give rise to the both-sided extensions of $v$: $avc, bvd, avd$.  

If for some letter $f\not =c,d$ the word $pyf$ is a prefix of $\eta(\ell)p$ for some $\ell \not= j$, then $iw\ell$ gives rise to the both-sided extension $avf$ of $v$. 

It follows that ${\mathrm b}(v)=0$.
\item Consider $j<d-1$, then $jwi$ and $(d-1)w(d-1)$ give rise to the bispecial factor $\overline{v}$, where $v$ was defined in the previous item. By closedness of $\eta(\uu_d)$ under reversal it holds ${\mathrm b}(\overline v)=0$, too.
\item Consider $j, k\leq d-1, j\not =k$, then $iwj$ and $kwi$ give rise to the bispecial factor $v=x\eta(w)py$, where $x={\mathrm {lcs}}\{\eta(i), \eta(k)\}$ and $py={\mathrm {lcp}}\{\eta(i)p, \eta(j)p\}$. If $y=\overline{x}$, we come back to the case (a). Assume thus $y\not =\overline{x}$, i.e., $v$ is not a~palindrome. Let us treat the case where $|y|>|x|$, i.e., $y$ has the prefix $\overline{x}$. The opposite case may be treated analogously.

If $ax$ is a suffix of $\eta(i)$ and $bx$ is a suffix of $\eta(k)$ and $pyc$ is a prefix of $\eta(j)p$ and $pyd$ is a prefix of $\eta(i)p$, then $iwj, kwi, jwi$ give rise to the both-sided extensions of $v$: $avc, bvd, avd$. 

If for some letter $e\not =a,b$  the word $ex$ is a suffix of $\eta(\ell)$ for some $\ell>i$, then $\ell wi$ give rise to the both-sided extensions $evd$ of $v$.

If for some letter $f\not =c,d$ the word $pyf$ is a prefix of $\eta(\ell)p$ for some $\ell >i$, then $iw\ell$ gives rise to the both-sided extension $avf$ of $v$. By the way, the word $\ell w i$ provides the both-sided extension $avd$ of $v$, which is not new.

It follows that ${\mathrm b}(v)=0$.
\end{enumerate}

\end{enumerate}
\end{proof}

Based on the known repetition threshold of binary and ternary rich sequences and on some observations for quaternary rich sequences, we expect that suitable candidates for rich sequences with minimum critical exponent will show a large degree of symmetry~\cite{DvPe2025}.
\section{Acknowledgment} We are indebted to our colleague Zuzana Masáková for consultation on some issues related to the irreducibility of polynomials.

\section{Appendix}

First, let us recall a simple property of square matrices. 

\begin{lemma}\label{lem:obecnaM}  Let $M \in \mathbb{R}^{k\times k}$ have $k$ distinct eigenvalues. For every eigenvalue $\lambda$,  a row vector $\vec{u}_\lambda$ and a column vector $\vec{v}_\lambda$ denote a left and  a right eigenvectors of the matrix $M$ corresponding to $\lambda$. Then for every $k$-dimensional row vector $\vec{h}$,    every $k$-dimensional column vector $\vec{g}$ and every $n \in \N$ 
$$
\vec{h}\,M^n \vec{g} = \sum_{\lambda\in S} a_\lambda \, \lambda^n, $$
where 
$S$ denotes the set of eigenvalues of $M$  and  $a_\lambda =  \frac{1}{\vec{u}_\lambda\,\vec{v}_\lambda}\, \bigl(\vec{h}\,\vec{v}_\lambda\bigr) \, \bigl(\vec{u}_\lambda\, \vec{g}\bigr)$ .

\end{lemma}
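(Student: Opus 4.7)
My plan is to exploit diagonalizability. Because $M \in \mathbb{R}^{k\times k}$ has $k$ pairwise distinct eigenvalues, the right eigenvectors $\{\vec{v}_\lambda : \lambda \in S\}$ form a basis of $\mathbb{C}^k$ and, similarly, the left eigenvectors $\{\vec{u}_\lambda : \lambda \in S\}$ form a basis of the row space. The first step is to expand the column vector $\vec{g}$ in the right-eigenvector basis, writing $\vec{g} = \sum_{\lambda \in S} c_\lambda \vec{v}_\lambda$. Applying $M^n$ term by term gives $M^n \vec{g} = \sum_{\lambda \in S} c_\lambda \lambda^n \vec{v}_\lambda$, and multiplying by $\vec{h}$ on the left yields
\[
\vec{h}\,M^n\,\vec{g} \;=\; \sum_{\lambda \in S} c_\lambda \bigl(\vec{h}\,\vec{v}_\lambda\bigr)\,\lambda^n.
\]
So the only thing left is to identify the coefficient $c_\lambda$ in terms of the data given in the statement.

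For this I will use the standard biorthogonality of left and right eigenvectors: if $\mu \neq \lambda$ are eigenvalues, then writing $\vec{u}_\mu M \vec{v}_\lambda$ in two ways gives $\mu (\vec{u}_\mu \vec{v}_\lambda) = \lambda (\vec{u}_\mu \vec{v}_\lambda)$, forcing $\vec{u}_\mu \vec{v}_\lambda = 0$. Multiplying the expansion $\vec{g} = \sum_\mu c_\mu \vec{v}_\mu$ by $\vec{u}_\lambda$ on the left therefore collapses everything but the $\mu = \lambda$ term, giving $\vec{u}_\lambda\,\vec{g} = c_\lambda (\vec{u}_\lambda \vec{v}_\lambda)$.

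A small point I would want to verify is that the scalar $\vec{u}_\lambda \vec{v}_\lambda$ is nonzero, so that division is legal. This follows from distinctness of the eigenvalues: assembling the $\vec{u}_\lambda$ as rows of a matrix $U$ and the $\vec{v}_\lambda$ as columns of a matrix $V$, biorthogonality shows $UV$ is the diagonal matrix with entries $\vec{u}_\lambda \vec{v}_\lambda$, while both $U$ and $V$ are invertible (their rows, resp.\ columns, are bases). Hence every diagonal entry is nonzero. Solving for $c_\lambda$ and substituting back gives exactly
\[
\vec{h}\,M^n\,\vec{g} \;=\; \sum_{\lambda \in S} \frac{1}{\vec{u}_\lambda\,\vec{v}_\lambda}\bigl(\vec{h}\,\vec{v}_\lambda\bigr)\bigl(\vec{u}_\lambda\,\vec{g}\bigr)\,\lambda^n,
\]
as required. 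There is no real obstacle here: the lemma is a direct consequence of diagonalization plus biorthogonality, and the only care needed is to check that the denominator $\vec{u}_\lambda \vec{v}_\lambda$ does not vanish, which the distinct-eigenvalue hypothesis guarantees.
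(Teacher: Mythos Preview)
Your proof is correct and follows essentially the same approach as the paper: both derive biorthogonality of left and right eigenvectors from the computation $\lambda(\vec{u}_\mu\vec{v}_\lambda)=\vec{u}_\mu M\vec{v}_\lambda=\mu(\vec{u}_\mu\vec{v}_\lambda)$ and then use diagonalizability, the paper via $M^n=RD^nR^{-1}$ and you via the equivalent expansion of $\vec{g}$ in the right-eigenvector basis. Your extra step justifying $\vec{u}_\lambda\vec{v}_\lambda\neq 0$ is a nice bit of care the paper leaves implicit.
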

\begin{proof} Let $\lambda, \mu \in S$. As  $\lambda\, \vec{u}_\lambda \, \vec{v}_\mu = \vec{u}_\lambda M \vec{v}_\mu = \mu \, \vec{u}_\lambda\,\vec{v}_\mu$, we deduce that if $\lambda \neq \mu$, then $\vec{u}_\lambda \, \vec{v}_\mu = 0$. In other words, if columns of a matrix $R \in \mathbb{R}^{k\times k}$ are formed by eigenvectors of $M$ corresponding to distinct eigenvalues, then the rows of the inverse matrix $R^{-1}$ are formed by the row vectors $\vec{u}_\lambda$ normalized by $\frac{1}{\vec{u}_\lambda\,\vec{v}_\lambda}$.   Obviously,  $M = R\,D\,R^{-1}$, where $D$ is a diagonal matrix with eigenvalues $\lambda \in S$ on its diagonal.  Substituting $M^n$ by $ R\,D^n\,R^{-1}$ we get the required equality. 
\end{proof}

Henceforth, $M $ denotes the incidence matrix  of the morphism $\varphi = \varphi_{2d+1}$, i.e., 
\begin{equation}\label{def:morphism_matrix}
M=
\left(\begin{smallmatrix}
1&1&\cdots &1&1&1\\
1&0&\cdots &0&0&0\\
0&1&\cdots &0&0&0\\
\vdots&&\ddots&\ddots&&\vdots\\
&&&&&\\
0&0&\cdots &1&0&0\\
0&0&\cdots &0&1&2\\
\end{smallmatrix}\right)\in \mathbb{R}^{(2d+1)\times (2d+1) }\,.
\end{equation}
We make use of the following properties of $M$ that are easy to check. 

 \begin{description}
     \item[P1] 
 The characteristic polynomial of $M$ equals
$$\chi(t) = \underbrace{(t^{d+1}-2t^{d}-1)}_{=:f(t)}\underbrace{(t^d-t^{d-1} -t^{d-2}-\cdots - t-1)}_{=:g(t)} = \tfrac{1}{t-1}\Bigl(t^{2d}\bigl(t-2)^2-1\Bigr).$$

Note that the trinomial $f$ is irreducible by \cite{Har2012} and   satisfies the Hollander condition~\cite{Hol1996} (a polynomial $h(t)= t^n - a_{n-1}t^{n- 1}-a_{n-2}t^{n-2}- \cdots - a_0$ with  $a_i \in \N, a_0\neq 0$, satisfies the Hollander condition if  $a_{n-1}> a_{n-2}+a_{n-3}+\cdots +a_0$)
and thus the  dominant root of $f$, denote it  $\Lambda$,  is a Pisot number, i.e., $\Lambda >1$ and all other roots of $f$ are in modulus
 strictly smaller than $1$.  Similarly,  the polynomial $g$ satisfies the Frougny-Solomyak condition~\cite{FrSo1992}  (a polynomial $h(t)= t^n - a_{n-1}t^{n- 1}-a_{n-2}t^{n-2}- \cdots - a_0$  with  $a_i \in \N, a_0\neq 0$, satisfies the Frougny-Solomyak  condition if  $ a_{n-1}\geq a_{n-2}\geq \cdots \geq a_0$). Hence $g$ is irreducible and  the dominant root, denoted $\beta$, is also Pisot.  In summary, all eigenvalues in the spectrum of $M$ are distinct and the spectrum contains just two eigenvalues outside the closure of  the unit circle, namely 
$\Lambda \in (2,3)$ and $\beta \in (1,2)$. They   satisfy   \begin{equation}\label{eq:rovnice}\Lambda^{d+1}-2\Lambda^d=1\qquad \text{and } \qquad \beta^{d+1}-2\beta^d=-1.\end{equation}
Denote $S_\Lambda$ the set of the algebraic conjugates of $\Lambda$ and by $S_\beta$ the set of the algebraic conjugates of $\beta$. The spectrum of $M$ equals $S_\Lambda \cup S_\beta$ and the elements of the spectrum are mutually distinct. 
\item[P2] The right eigenvector of $M$ corresponding to an eigenvalue $\lambda \in S_\Lambda \cup S_\beta $ is 
$$
\vec{v}_\lambda = \bigl(\lambda^{2d-1}(\lambda-2),\lambda^{2d-2}(\lambda-2), \ldots , \lambda(\lambda-2), \lambda-2, 1\bigr)^T;$$
the left row eigenvector to $\lambda$ is 

$$
\vec{u}_\lambda = \bigl(1, \lambda -1, \lambda^2 - \lambda -1, \ldots, \lambda^{2d} - \lambda^{2d-1} - \cdots - \lambda -1\bigr),$$
equivalently, the  $i^{th}$ component  of $\vec{u}_\lambda$ is $\frac{1}{\lambda-1}\bigl( \lambda^{i+1} - 2\lambda^i +1\bigr)$ for $i=0,1,\ldots, 2d$.

In particular,  the last  entry of  $\vec{u}_\lambda$ is 
$$\frac{1}{\lambda -1}\bigl(\lambda^{2d+1} - 2 \lambda^{2d} +1\bigr) = \left\{ \begin{array}{cc}\lambda^d &\text{ if $\lambda \in S_\Lambda$\,;}\\
\frac{-\lambda^d +1}{\lambda -1} & \text{ if $\lambda \in S_\beta$\,.}
\end{array}\right.$$

\item[P3] If  $\lambda \in S_\Lambda$, then 
$$  \vec{u}_{\lambda} \vec{v}_\lambda = \frac{2\lambda^{d-1}}{\lambda -1}\Bigl(d(\lambda-2)+ \lambda\Bigr). $$

 \end{description}

 \begin{lemma}\label{lem:ReseniRekurence} Let $n \in \N$ and  $\vec{h} =(\underbrace{1,1, \ldots,1}_{d\text{-items}}, \,\underbrace{2^0,2^1, \ldots, 2^{d}}_{(d+1)\text{-items}} \,).$  Then \  
$$
\vec{h}\,M^n \,  \vec{e}_{2d} = \sum\limits_{\lambda \in S_\Lambda} c_\lambda \lambda^n, \  \text{where } \ c_\lambda = \frac{1}{(d+1)\lambda - 2d}\, \lambda^{d+1}.   
$$
\end{lemma}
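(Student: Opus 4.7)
The plan is to apply Lemma~\ref{lem:obecnaM} directly. Since Property~P1 tells us that the $2d+1$ eigenvalues in $S_\Lambda\cup S_\beta$ are pairwise distinct, we obtain
\[
\vec{h}\,M^n\vec{e}_{2d} \;=\; \sum_{\lambda\in S_\Lambda\cup S_\beta} a_\lambda\,\lambda^n
\qquad\text{with}\qquad
a_\lambda \;=\; \frac{1}{\vec{u}_\lambda \vec{v}_\lambda}\,\bigl(\vec{h}\,\vec{v}_\lambda\bigr)\,\bigl(\vec{u}_\lambda\,\vec{e}_{2d}\bigr).
\]
Thus the proof reduces to evaluating the three scalars $\vec{h}\vec{v}_\lambda$, $\vec{u}_\lambda\vec{e}_{2d}$ and $\vec{u}_\lambda\vec{v}_\lambda$, and then showing that $a_\lambda$ vanishes for every $\lambda\in S_\beta$ while it equals $\frac{\lambda^{d+1}}{(d+1)\lambda-2d}$ for $\lambda\in S_\Lambda$.

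First I would compute $\vec{h}\,\vec{v}_\lambda$ by splitting the inner product according to the two blocks of the weight vector. The first $d$ entries of $\vec{v}_\lambda$ (which are weighted by $1$) give the geometric sum $(\lambda-2)\sum_{k=d}^{2d-1}\lambda^k=(\lambda-2)\lambda^d\frac{\lambda^d-1}{\lambda-1}$; the next $d$ entries (weighted by $2^0,\dots,2^{d-1}$) give, after the substitution $k=2d-1-i$, the geometric sum $(\lambda-2)\sum_{k=0}^{d-1}2^{d-1-k}\lambda^k=\lambda^d-2^d$; and the last entry contributes $2^d$. Adding these three pieces and simplifying yields the compact expression
\[
\vec{h}\,\vec{v}_\lambda \;=\; \lambda^d\cdot\frac{\lambda^{d+1}-2\lambda^d+1}{\lambda-1}.
\]

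This is where the key algebraic identity enters. A short calculation shows $(t-1)g(t)=t^{d+1}-2t^d+1$, so $\vec{h}\,\vec{v}_\lambda=\lambda^d\,g(\lambda)$. By Property~P1, $g$ is the minimal polynomial of $\beta$, hence $g(\lambda)=0$ for every $\lambda\in S_\beta$, and consequently $a_\lambda=0$ for all such $\lambda$. This is the decisive step: the weights $h_i$ are tailored so that $\vec{h}$ is orthogonal to every right eigenvector associated with $S_\beta$, which is precisely why only the contributions from $S_\Lambda$ survive.

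For $\lambda\in S_\Lambda$, the defining relation $\lambda^{d+1}-2\lambda^d=1$ gives $\vec{h}\,\vec{v}_\lambda=\frac{2\lambda^d}{\lambda-1}$. Property~P2 supplies the last entry of $\vec{u}_\lambda$, namely $\vec{u}_\lambda\vec{e}_{2d}=\lambda^d$, and Property~P3 gives $\vec{u}_\lambda\vec{v}_\lambda=\frac{2\lambda^{d-1}}{\lambda-1}\bigl(d(\lambda-2)+\lambda\bigr)$. Substituting into the formula for $a_\lambda$ collapses the factors of $\frac{2}{\lambda-1}$ and yields
\[
a_\lambda \;=\; \frac{\lambda^{2d}}{\lambda^{d-1}\bigl(d(\lambda-2)+\lambda\bigr)} \;=\; \frac{\lambda^{d+1}}{(d+1)\lambda-2d} \;=\; c_\lambda,
\]
which completes the proof. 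The only step requiring genuine care is the simplification of $\vec{h}\,\vec{v}_\lambda$; everything else is a direct substitution from Properties~P1--P3.
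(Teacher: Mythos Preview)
Your proof is correct and follows essentially the same approach as the paper: apply Lemma~\ref{lem:obecnaM}, compute $\vec{h}\,\vec{v}_\lambda$, and then read off $\vec{u}_\lambda\vec{e}_{2d}$ and $\vec{u}_\lambda\vec{v}_\lambda$ from Properties~P2 and~P3. The paper simply states the values of $\vec{h}\,\vec{v}_\lambda$ without working them out, whereas you carry out the computation explicitly and obtain the intermediate identity $\vec{h}\,\vec{v}_\lambda=\lambda^d g(\lambda)$, which nicely explains why the $S_\beta$-contributions vanish; this is a small bonus, not a different method.
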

\begin{proof} 
By P1,  we can apply  Lemma \ref{lem:obecnaM}.  Using P2 and \eqref{eq:rovnice} we derive 
$$ \vec{h}\, \vec{v}_\lambda = \left\{
\begin{array}{ll}
    \frac{2\lambda^{d}}{\lambda -1}\quad  &  \text{if } \lambda \in S_\Lambda;\\
    0 & \text{if } \lambda \in S_\beta,
\end{array} \right.
$$
and $ \vec{u}_\lambda \,\vec{e}_{2d} =\lambda^d$ for $\lambda \in S_\Lambda$. Exploiting P3, we 
get  
$$\vec{h}\,M^n \, \vec{e}_{2d} = \sum_{\lambda \in S_\Lambda} \tfrac{1}{\vec{u}_\lambda\,\vec{v}_\lambda}\, \tfrac{2\lambda^d}{\lambda -1} \lambda^d \, \lambda^n =  \sum_{\lambda \in S_\Lambda} \tfrac{\lambda^{d+1}}{(d+1)\lambda - 2d}\lambda^{n} .
$$

\end{proof}

\end{document}